\newcommand\be{\begin{equation}}
\newcommand\ee{\end{equation}}
\newcommand\bea{\begin{eqnarray}}
\newcommand\eea{\end{eqnarray}}
\newcommand\bi{\begin{itemize}}
\newcommand\ei{\end{itemize}}
\newcommand\ben{\begin{enumerate}}
\newcommand\een{\end{enumerate}}
\newtheorem{thm}{Theorem}[section]
\newtheorem{cor}[thm]{Corollary}
\newtheorem{lem}[thm]{Lemma}
\newtheorem{prop}[thm]{Proposition}
\newtheorem{defi}[thm]{Definition}
\newtheorem{rek}[thm]{Remark}
\newtheorem{cla}[thm]{Claim}
\numberwithin{equation}{section}
\begin{document}

\title{Generalizing the Kantorovich Metric to Projection-Valued Measures}

\maketitle

\begin{center}Trubee Davison\footnote{\indent trubeedavison@gmail.com}\end{center}

\begin{abstract} Given a compact metric space $X$, the collection of Borel probability measures on $X$ can be made into a compact metric space via the Kantorovich metric \cite{Hutchinson}.  We partially generalize this well known result to projection-valued measures.  In particular, given a Hilbert space $\mathcal{H}$, we consider the collection of projection-valued measures from $X$ into the projections on $\mathcal{H}$.  We show that this collection can be made into a complete and bounded metric space via a generalized Kantorovich metric.  However, we add that this metric space is not compact, thereby identifying an important distinction from the classical setting.  We have seen recently that this generalized metric has been previously defined by F. Werner in the setting of mathematical physics \cite{Werner}.  To our knowledge, we develop new properties and applications of this metric.  Indeed, we use the Contraction Mapping Theorem on this complete metric space of projection-valued measures to provide an alternative method for proving a fixed point result due to P. Jorgensen (see \cite{Jorgensen2} and \cite{Jorgensen}).  This fixed point, which is a projection-valued measure, arises from an iterated function system on $X$, and is related to Cuntz Algebras. 

\end{abstract}

\tableofcontents

\noindent \keywords[Keywords: Kantorovich metric, Projection-valued measure, Cuntz algebra, Fixed point] 

\noindent [Mathematics subject classification 2010: 46C99 - 46L05] \\

\noindent [Publication note: The final publication is available at Springer via \\
http://dx.doi.org/10.1007/s10440-014-9976-y. An erratum is available at Springer via \\
http://dx.doi.org/10.1007/s10440-015-0018-1. We would like to acknowledge Krystal Taylor for identifying the error. The article below has been updated to correct the error. It has also been shortened (removed Section 4) from the previous version to match the final publication.]

\newpage

\section{Background:} 

Let $(X,d)$ be a compact metric space, and define $M(X)$ to be the collection of Borel probability measures on $X$.  It is well known, see \cite{Hutchinson}, that $M(X)$ can be equipped with the Kantorovich metric, $H$, given by: 

\begin{equation} H(\mu, \nu) = \sup_{\phi \in \text{Lip}_1(X)} \left\{ \left| \int_X \phi d\mu - \int_X \phi d\nu \right| \right\}, \end{equation} 

\noindent where $\mu$ and $\nu$ are elements of $M(X)$, and where

$$\text{Lip}_1(X) = \{ \phi:X \rightarrow \mathbb{R} : |\phi(x) - \phi(y)| \leq d(x,y) \text{ for all } x,y \in X\}.$$

\begin{defi} A sequence measures $\{\mu_n\}_{n=1}^{\infty} \subseteq M(X)$ converges weakly to a measure $\mu \in M(X)$, written $\mu_n \Rightarrow \mu$, if for all $f \in C_{\mathbb{R}}(X)$, $\int_X {f} d\mu_n \rightarrow \int_X {f} d\mu$, where $C_{\mathbb{R}}(X)$ is the collection of continuous real valued functions on $X$. 
\end{defi}

We record the following two well known facts. \footnote{These facts were presented in F. Latremoliere's Ulam Seminar at the University of Colorado (Fall 2013)}  

\begin{prop} \label{twofacts} \text{}

\begin{enumerate} 

\item $(M(X), H)$ is compact. 

\item The topology induced by the metric $H$ on $M(X)$ coincides with the weak topology on $M(X)$.  

\end{enumerate} 

\end{prop}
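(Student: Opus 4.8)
The plan is to prove the two
facts about $(M(X),H)$ in the natural order: first establish that the
metric topology coincides with the weak topology (part (2)), and then deduce
compactness (part (1)) by combining the weak-topology characterization with a
standard compactness result for probability measures.

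First I would prove part (2). The key observation is that the defining
functionals of the weak topology are exactly the maps $\hat f$, and these are
manifestly continuous with respect to $H$: indeed, for $f \in
\text{Lip}_1(X)$ one has $|\hat f(\mu) - \hat f(\nu)| \le H(\mu,\nu)$ directly
from the definition of the supremum in $H$, and an arbitrary Lipschitz
function differs from an element of $\text{Lip}_1(X)$ only by a positive
scalar, so $|\hat f(\mu)-\hat f(\nu)| \le L\,H(\mu,\nu)$ where $L$ is the
Lipschitz constant of $f$. This shows every basic weak-open set is
$H$-open, so the weak topology is coarser than the metric topology. For the
reverse inclusion I would argue that $H$-balls are weakly open, or more
cleanly, show that whenever $\mu_n \Rightarrow \mu$ one has $H(\mu_n,\mu) \to
0$. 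Here the main obstacle arises: the supremum in the definition of $H$
ranges over the infinite family $\text{Lip}_1(X)$, so pointwise weak
convergence does not immediately control the supremum. The standard remedy is
to exploit compactness of $X$: by the Arzel\`a--Ascoli theorem,
$\text{Lip}_1(X)$ restricted to functions normalized at a fixed basepoint is a
uniformly bounded, equicontinuous, hence relatively compact subset of
$C_{\mathbb{R}}(X)$. One then replaces the supremum over the infinite family by
control over a finite $\epsilon$-net of $\text{Lip}_1(X)$ in the uniform norm,
and weak convergence handles each net element; the equicontinuity transfers
the estimate uniformly. This is the heart of the argument and the step I
expect to be most delicate.

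Having identified the two topologies, I would prove part (1) by establishing
sequential compactness, which suffices since the metric topology is first
countable (indeed metrizable). Given any sequence $\{\mu_n\} \subseteq M(X)$, I
would invoke the standard weak-compactness result for probability measures on a
compact metric space: since $X$ is compact, $M(X)$ is weakly sequentially
compact (this is the Riesz--Markov representation together with the
Banach--Alaoglu theorem applied to the unit ball of $C_{\mathbb{R}}(X)^*$, noting
that weak-$*$ limits of probability measures remain probability measures
because $X$ is compact so $1 \in C_{\mathbb{R}}(X)$). Thus $\{\mu_n\}$ has a
subsequence converging weakly to some $\mu \in M(X)$, and by part (2) this
subsequence converges to $\mu$ in the metric $H$. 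Hence every sequence in
$M(X)$ has an $H$-convergent subsequence, so $(M(X),H)$ is compact.

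The organizing principle is therefore to do the topological identification
first and to treat compactness as a corollary of it together with an
off-the-shelf weak-compactness theorem; this avoids verifying the metric-space
axioms and compactness by hand. I would additionally note in passing that $H$
is genuinely a metric (symmetry and the triangle inequality are immediate from
properties of the supremum and absolute value, and $H(\mu,\nu)=0$ forces
$\int \phi\,d\mu = \int\phi\,d\nu$ for all Lipschitz $\phi$, hence for all
continuous $\phi$ by density, giving $\mu=\nu$ via Riesz--Markov), since this
underpins the claim that $(M(X),H)$ is a metric space at all. With these pieces
in place, both assertions of Proposition~\ref{twofacts} follow.
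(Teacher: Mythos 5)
The paper itself does not prove Proposition~\ref{twofacts}: it is quoted as a pair of well-known facts with a citation to \cite{Latre}, so there is no in-paper proof to compare line by line. The closest internal benchmark is the paper's later theorem identifying the $\rho$-topology with the weak topology on $P(X)$, and your proposal for part (2) follows essentially that same strategy: one direction rests on approximating continuous functions by Lipschitz functions (the paper's Lemma~\ref{density}), the other on Arzel\`a--Ascoli applied to the basepoint-normalized subset of $\text{Lip}_1(X)$ together with a finite $\frac{\epsilon}{3}$-net. Your part (1) (weak sequential compactness of $M(X)$ via Riesz--Markov and Banach--Alaoglu, then transfer to $H$ via part (2)) is the standard argument and is sound; note that to get \emph{sequential} compactness from Alaoglu you need the weak-$*$ topology on the unit ball of $C_{\mathbb{R}}(X)^*$ to be metrizable, which holds because $C_{\mathbb{R}}(X)$ is separable for compact metric $X$ --- a fact the paper itself uses when asserting the weak topology is first countable.

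One genuine, though easily repaired, gap: in arguing that the weak topology is coarser than the $H$-topology, you verify $H$-continuity of $\hat{f}$ only for Lipschitz $f$, and then conclude that ``every basic weak-open set is $H$-open.'' But the basic weak-open sets are indexed by arbitrary $f \in C_{\mathbb{R}}(X)$, not just Lipschitz ones, so this step as written proves less than what is claimed. The patch is the density you invoke elsewhere in your proposal: given $f \in C_{\mathbb{R}}(X)$ and $\epsilon > 0$, pick $g \in \text{Lip}(X)$ with $\|f-g\|_{\infty} < \epsilon/3$; since $\mu,\nu$ are probability measures, $|\hat{f}(\mu)-\hat{f}(\nu)| \leq 2\|f-g\|_{\infty} + |\hat{g}(\mu)-\hat{g}(\nu)| \leq \frac{2\epsilon}{3} + L\,H(\mu,\nu)$, where $L$ is the Lipschitz constant of $g$, so $\hat{f}$ is $H$-continuous after all. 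With that insertion your argument is complete; the rest --- in particular the Ascoli/net argument for the delicate direction of part (2), which you correctly identify as the heart of the matter --- is correct as outlined.
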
 

\begin{cor} $(M(X), H)$ is a complete metric space.
\end{cor}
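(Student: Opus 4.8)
The plan is to deduce completeness directly from the compactness asserted in part (1) of Proposition \ref{twofacts}, using the standard fact that every compact metric space is complete. Since $H$ is a genuine metric on $M(X)$, the space $(M(X), H)$ is metrizable, and hence compactness is equivalent to sequential compactness. This equivalence is the only ingredient from general topology that I would invoke; the substantive content lives entirely in the preceding proposition.

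Concretely, I would begin with an arbitrary Cauchy sequence $\{\mu_n\}_{n=1}^{\infty}$ in $(M(X), H)$. By part (1) of Proposition \ref{twofacts}, together with the equivalence of compactness and sequential compactness in a metric space, this sequence admits a subsequence $\{\mu_{n_k}\}$ converging in the metric $H$ to some limit $\mu \in M(X)$.

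The remaining step promotes convergence of the subsequence to convergence of the full sequence, via the elementary observation that a Cauchy sequence possessing a convergent subsequence converges to the same limit. Given $\epsilon > 0$, I would choose $N$ so that $H(\mu_n, \mu_m) < \epsilon/2$ for all $n,m \geq N$, and then choose $k$ large enough that $n_k \geq N$ and $H(\mu_{n_k}, \mu) < \epsilon/2$. The triangle inequality then yields $H(\mu_n, \mu) < \epsilon$ for all $n \geq N$, so $\mu_n \to \mu$ in $H$. Since every Cauchy sequence thus converges in $M(X)$, the space $(M(X), H)$ is complete.

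Because this argument is purely formal once compactness is in hand, I do not anticipate any genuine obstacle. The only point requiring a moment of care is the appeal to metrizability underpinning the passage from compactness to sequential compactness; this is automatic here, since $H$ is given as a metric, and so no additional justification is needed.
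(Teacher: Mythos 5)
Your proof is correct and takes the same route as the paper: the corollary is stated as an immediate consequence of part (1) of Proposition \ref{twofacts}, relying on the standard fact that a compact metric space is complete. You have simply written out the subsequence argument behind that standard fact, which the paper leaves implicit.
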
 

We continue with additional preliminaries.  Let $\mathcal{S} = \{\sigma_0,...,\sigma_{N-1}\}$ be an iterated function system (IFS) on $(X,d)$.  That is, for all $0 \leq i \leq N-1$, $\sigma_i: X \rightarrow X$ such that for all $x,y \in X$

\begin{center} $\displaystyle{d(\sigma_i(x), \sigma_i(y)) \leq r_i d(x,y)}$, \end{center} 

\noindent where $0 < r_i < 1$.  Indeed, each $\sigma_i$ is a Lipschitz contraction on $X$, and $r_i$ is the Lipschitz constant associated to $\sigma_i$.  Let $\sigma: X \rightarrow X$ be a Borel measurable function such that $\sigma \circ \sigma_i = \text{id}_{X}$ for all $0 \leq i \leq N-1$.  

Assume further that

\begin{equation} \label{hut} \displaystyle{X = \bigcup_{i=0}^{N-1} \sigma_i(X)}, \end{equation} 

\noindent where the above union is disjoint.  We provide a standard example for the above scenario:  

\begin{itemize} 

\item Let $X = \text{Cantor Set} \subseteq [0,1]$, with the standard metric on $\mathbb{R}$.

\item Let $\sigma_0(x) = \frac{1}{3}x$ and $\sigma_1(x) = \frac{1}{3}x + \frac{2}{3}$. 

\item Let $\sigma(x) = 3x \text{ mod } 1$. 

\end{itemize}

We now state the following important result due to Hutchinson.  

\begin{thm} \cite{Hutchinson} The map $T: M(X) \rightarrow M(X)$ by 

$$\nu(\cdot) \mapsto \sum_{k=0}^{N-1} \frac{1}{N} \nu(\sigma_k^{-1}(\cdot)),$$

\noindent is a Lipschitz contraction in the $(M(X), H)$ metric space, with Lipschitz constant $r :=\text{max}_{0 \leq i \leq N-1}\{r_i\}$.  
\end{thm}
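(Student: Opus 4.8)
The plan is to estimate $H(T\mu, T\nu)$ directly from the definition of the Kantorovich metric, after first recording two elementary facts. Before anything else I would check that $T$ is well defined, i.e. that $T\nu$ is again a Borel probability measure: nonnegativity and countable additivity are inherited from the pushforwards $\nu \circ \sigma_k^{-1}$, and since each $\sigma_k$ maps $X$ into $X$ we have $\sigma_k^{-1}(X) = X$, whence $(T\nu)(X) = \frac{1}{N}\sum_{k=0}^{N-1}\nu(X) = 1$. Thus $T\nu \in M(X)$, and the quantity $H(T\mu, T\nu)$ makes sense.

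The first key tool is the change-of-variables formula for pushforward measures: for any $\phi \in C_{\mathbb{R}}(X)$ (in particular for any $\phi \in \text{Lip}_1(X)$, since a Lipschitz function on the compact space $X$ is continuous and bounded),
\begin{equation*}
\int_X \phi \, d(T\nu) = \frac{1}{N}\sum_{k=0}^{N-1}\int_X \phi \, d(\nu\circ\sigma_k^{-1}) = \frac{1}{N}\sum_{k=0}^{N-1}\int_X (\phi\circ\sigma_k)\, d\nu,
\end{equation*}
which is legitimate because each $\sigma_k$ is continuous, hence Borel measurable. Subtracting the analogous expression for $\mu$ and applying the triangle inequality gives
\begin{equation*}
\left| \int_X \phi\, d(T\mu) - \int_X \phi\, d(T\nu)\right| \leq \frac{1}{N}\sum_{k=0}^{N-1}\left| \int_X (\phi\circ\sigma_k)\, d\mu - \int_X (\phi\circ\sigma_k)\, d\nu\right|.
\end{equation*}

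The second key tool, which is the source of the contraction factor $r$, is that precomposing with a contraction scales the Lipschitz constant. If $\phi \in \text{Lip}_1(X)$, then for all $x,y \in X$,
\begin{equation*}
|\phi(\sigma_k(x)) - \phi(\sigma_k(y))| \leq d(\sigma_k(x), \sigma_k(y)) \leq r_k\, d(x,y) \leq r\, d(x,y),
\end{equation*}
so that $\frac{1}{r}(\phi\circ\sigma_k) \in \text{Lip}_1(X)$. By linearity of the integral, each summand above equals $r$ times the Kantorovich-type difference for the \emph{normalized} test function $\frac{1}{r}(\phi\circ\sigma_k)$, and the latter is bounded by $H(\mu,\nu)$ by the very definition of $H$ as a supremum over $\text{Lip}_1(X)$. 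Averaging over $k$ collapses the prefactor $\frac{1}{N}\sum_{k}$ to a single copy of $r\, H(\mu,\nu)$, yielding $\big| \int_X \phi\, d(T\mu) - \int_X \phi\, d(T\nu)\big| \leq r\, H(\mu,\nu)$ for every $\phi \in \text{Lip}_1(X)$; taking the supremum over such $\phi$ then gives $H(T\mu, T\nu) \leq r\, H(\mu,\nu)$, as desired.

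I do not anticipate a serious obstacle: once the pushforward integration identity is in hand, the argument is essentially bookkeeping. The one point that warrants care is combining linearity of the integral with the Lipschitz scaling so that the normalized test function $\frac{1}{r}(\phi\circ\sigma_k)$ genuinely lands in $\text{Lip}_1(X)$ — this is precisely what licenses bounding each summand by $H(\mu,\nu)$ rather than by something larger, and it is where the use of $r = \max_i r_i$ (as opposed to the individual $r_i$) is forced. A minor auxiliary check is the validity of the change-of-variables formula against merely continuous integrands, which follows from the standard simple-function approximation argument and which I would treat as known.
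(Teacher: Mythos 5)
Your proof is correct. One point of order: the paper does not actually prove this theorem --- it is quoted from \cite{Hutchinson} with no argument supplied --- so there is no in-paper proof of this statement to compare against. The closest analogue is the paper's proof of the operator-valued generalization (the theorem showing $\Phi(E)(\cdot) = \sum_i S_i E(\sigma_i^{-1}(\cdot))S_i^*$ is a contraction on $(P(X),\rho)$), and your argument is exactly the scalar prototype of that proof: change of variables along each $\sigma_k$, the observation that $\frac{1}{r}(\phi\circ\sigma_k) \in \text{Lip}_1(X)$, bounding each summand by $r$ times the metric, and averaging over $k$. Two differences are worth recording. First, the operator-valued proof must reduce operator norms to the scalar measures $E_{h,h}$ via self-adjointness, and it anchors the test function at a base point $x_0$; in your setting neither step is needed, since the integrals are already scalars and $\mu, \nu$ are probability measures, so constants cancel automatically. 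Second --- and this is in your favor --- you keep the \emph{same} rescaled test function $\frac{1}{r}(\phi\circ\sigma_k)$ against both $\mu$ and $\nu$ before invoking the supremum defining $H$, which is precisely what the definition of the metric licenses. The paper's operator-valued proof instead majorizes the $E$-side integrand pointwise by $d(x,x_0)$ while leaving the $F$-side integrand as $g(x) = \frac{\phi(\sigma_i(x))-\phi(\sigma_i(x_0))}{r}$, and then bounds $\left\| \int d(x,x_0)\, dE - \int g\, dF \right\|$ by $\rho(E,F)$ --- a step not justified by the definition of $\rho$, which compares the two measures against a single common test function. Your bookkeeping, transplanted to the operator setting, would avoid that gap.
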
 

By applying the Contraction Mapping Theorem to the Lipschitz contraction $T$, there exists a unique measure, $\mu \in M(X)$, such that $T(\mu) = \mu$.  That is

$$\mu(\cdot) = \sum_{k=0}^{N-1} \frac{1}{N} \mu(\sigma_k^{-1}(\cdot)).$$

\noindent This unique invariant measure, $\mu$, is called the Hutchinson measure associated to $\mathcal{S}$.  

Consider the Hilbert space $L^2(X, \mu)$.  Define

\smallskip

\begin{center} $S_i: L^2(X,\mu) \rightarrow L^2(X,\mu)$ by $\displaystyle{\phi \mapsto (\phi \circ \sigma) \sqrt{N} {\bf{1}_{\sigma_i(X)}}}$ \end{center} 

\noindent for all $i = 0, ..., N-1$, and it's adjoint

\smallskip

\begin{center} $S_i^*: L^2(X,\mu) \rightarrow L^2(X, \mu)$ by $\displaystyle{\phi \mapsto \frac{1}{\sqrt{N}} (\phi \circ \sigma_i)}$ \end{center} 

\noindent for all $i = 0, ..., N-1$.  This leads to following result due to Jorgensen.  

\begin{thm} \cite{Jorgensen3} The maps $\{S_i: 0 \leq i \leq N-1\}$ are isometries, and the maps $\{S_i^* : 0 \leq i \leq N-1\}$ are their adjoints.  Moreover, these maps and their adjoints satisfy the Cuntz relations:

\begin{enumerate} 

\item $\displaystyle{\sum_{i=0}^{N-1} S_iS_i^* =  {\bf{1}}_{\mathcal{H}}}$ \label{Cuntz1}

\item $\displaystyle{S_i^*S_j = \delta_{i,j}{\bf{1}}_{\mathcal{H}}}$ \label{Cuntz2} where $0 \leq i,j \leq N-1$.

\end{enumerate}

\end{thm}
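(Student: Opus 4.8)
The plan is to reduce every assertion to a single change-of-variables formula coming from the invariance of the Hutchinson measure $\mu$. Rewriting $\mu = \sum_{k=0}^{N-1}\frac{1}{N}\mu(\sigma_k^{-1}(\cdot))$ in integral form via the pushforward, one obtains, for every $g \in L^1(X,\mu)$,
\[ \int_X g\,d\mu = \frac{1}{N}\sum_{k=0}^{N-1}\int_X (g\circ\sigma_k)\,d\mu. \]
Two elementary consequences of the hypotheses then do all the remaining work. First, because the union $X=\bigcup_i\sigma_i(X)$ is disjoint and $\sigma_k(x)\in\sigma_i(X)$ forces $k=i$, the composition $\mathbf{1}_{\sigma_i(X)}\circ\sigma_k$ equals the constant $\delta_{i,k}$. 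Second, the relation $\sigma\circ\sigma_i=\mathrm{id}_X$ shows that $\sigma$ restricted to $\sigma_i(X)$ is a left inverse of $\sigma_i$, and for $x=\sigma_i(y)\in\sigma_i(X)$ one has $\sigma_i(\sigma(x))=\sigma_i(y)=x$, so $\sigma_i\circ\sigma=\mathrm{id}$ on $\sigma_i(X)$.

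For the isometry claim I would expand $\|S_i\phi\|^2 = N\int_X |\phi\circ\sigma|^2\,\mathbf{1}_{\sigma_i(X)}\,d\mu$ and apply the displayed formula to the integrand $g=|\phi\circ\sigma|^2\mathbf{1}_{\sigma_i(X)}$. Since $g\circ\sigma_k = |\phi\circ(\sigma\circ\sigma_k)|^2(\mathbf{1}_{\sigma_i(X)}\circ\sigma_k)=\delta_{i,k}|\phi|^2$, only the $k=i$ term survives and the factor $N$ cancels the $\frac{1}{N}$, giving $\|S_i\phi\|^2=\|\phi\|^2$. The adjoint identity is the same computation carried out on the bilinear pairing: writing out $\langle S_i\phi,\psi\rangle = \sqrt{N}\int_X(\phi\circ\sigma)\overline{\psi}\,\mathbf{1}_{\sigma_i(X)}\,d\mu$ and applying the formula collapses the sum to the single term $\frac{1}{\sqrt{N}}\int_X \phi\,\overline{\psi\circ\sigma_i}\,d\mu=\langle\phi, S_i^*\psi\rangle$, confirming that $S_i^*$ as defined is genuinely the Hilbert space adjoint of $S_i$.

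The two Cuntz relations are then purely pointwise and require no integration. For relation (2) I would compute $S_i^*S_j\phi = \frac{1}{\sqrt{N}}\big((\phi\circ\sigma)\sqrt{N}\,\mathbf{1}_{\sigma_j(X)}\big)\circ\sigma_i = (\phi\circ\sigma\circ\sigma_i)(\mathbf{1}_{\sigma_j(X)}\circ\sigma_i)=\delta_{i,j}\phi$, using $\sigma\circ\sigma_i=\mathrm{id}_X$ and the disjointness fact. For relation (1) I would compute $S_iS_i^*\phi = (\phi\circ\sigma_i\circ\sigma)\,\mathbf{1}_{\sigma_i(X)}$, and since $\sigma_i\circ\sigma=\mathrm{id}$ on $\sigma_i(X)$ this equals $\phi\,\mathbf{1}_{\sigma_i(X)}$; summing over $i$ and using $\sum_i\mathbf{1}_{\sigma_i(X)}=\mathbf{1}_X$ (again the disjoint covering) yields $\sum_i S_iS_i^*=\mathbf{1}_{\mathcal{H}}$.

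The only genuinely delicate point is establishing and correctly invoking the integral form of $\mu$-invariance, together with the Borel measurability of $\sigma$ that makes the pushforward manipulations legitimate; once that change-of-variables identity is in hand, every statement reduces to bookkeeping of the two indicator-function identities above. I expect the bulk of the written proof to consist of verifying these two identities and carefully tracking the $\sqrt{N}$ factors.
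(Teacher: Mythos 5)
Your proof is correct, but note that the paper itself never proves this theorem: it is quoted as a preliminary from Jorgensen (the citation \cite{Jorgensen3}), so there is no internal argument to compare yours against. Your route --- deriving the change-of-variables identity $\int_X g\,d\mu = \frac{1}{N}\sum_{k}\int_X (g\circ\sigma_k)\,d\mu$ from Hutchinson invariance, and combining it with the two pointwise facts $\mathbf{1}_{\sigma_i(X)}\circ\sigma_k \equiv \delta_{ik}$ (disjointness) and $\sigma_i\circ\sigma = \mathrm{id}$ on $\sigma_i(X)$ --- is the natural one, and all four assertions follow exactly as you describe. One point you should make explicit rather than leave implicit: before computing the pairing $\langle S_i\phi,\psi\rangle$ you need to know that $S_i^*$ actually maps $L^2(X,\mu)$ into itself (equivalently, that the integrand in the pairing is in $L^1$). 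This comes from the same identity applied to $g=|\psi|^2\mathbf{1}_{\sigma_i(X)}$, which yields $\|\psi\circ\sigma_i\|_{L^2}^2 = N\int_{\sigma_i(X)}|\psi|^2\,d\mu \le N\|\psi\|_{L^2}^2$. The same computations also show that $S_i$ and $S_i^*$ are well defined on a.e.-equivalence classes (if $\phi=0$ a.e.\ then $S_i\phi=0$ and $S_i^*\phi=0$ a.e.), which deserves a sentence since $\sigma$ is only assumed Borel measurable and composition with a measurable map does not in general respect $\mu$-null sets.
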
 

\begin{cor} \cite{Jorgensen3} The Hilbert space $L^2(X, \mu)$ admits a representation of the Cuntz algebra, $\mathcal{O}_N$, on $N$ generators. 

\end{cor}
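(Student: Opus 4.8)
The plan is to obtain the representation directly from the universal property of the Cuntz algebra, using the preceding theorem as the sole nontrivial input. Recall that $\mathcal{O}_N$ is, by definition, the universal unital $C^*$-algebra generated by $N$ elements $s_0, \dots, s_{N-1}$ subject to the relations $s_i^* s_j = \delta_{i,j} \mathbf{1}$ and $\sum_{i=0}^{N-1} s_i s_i^* = \mathbf{1}$. Concretely, this universality means: whenever $A$ is a unital $C^*$-algebra and $t_0, \dots, t_{N-1} \in A$ satisfy these same two relations, there exists a unique unital $*$-homomorphism $\pi \colon \mathcal{O}_N \to A$ with $\pi(s_i) = t_i$ for every $i$.

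First I would set $A = \mathcal{B}(L^2(X,\mu))$, the $C^*$-algebra of bounded operators on $\mathcal{H} = L^2(X,\mu)$, and take $t_i = S_i$. The previous theorem supplies exactly what is required: the $S_i$ are isometries, hence bounded operators on $\mathcal{H}$, and items (1) and (2) of that theorem assert precisely that $\sum_{i=0}^{N-1} S_i S_i^* = \mathbf{1}_{\mathcal{H}}$ and $S_i^* S_j = \delta_{i,j} \mathbf{1}_{\mathcal{H}}$. Thus the family $\{S_i\}$ satisfies the defining Cuntz relations inside $\mathcal{B}(\mathcal{H})$, and the universal property produces the desired unital $*$-homomorphism $\pi \colon \mathcal{O}_N \to \mathcal{B}(\mathcal{H})$ sending each generator $s_i$ to $S_i$. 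This $\pi$ is by definition a representation of $\mathcal{O}_N$ on $L^2(X,\mu)$, establishing the corollary.

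Since the construction is almost formal once the theorem is in hand, the only point demanding care is the invocation of universality itself, which presupposes that $\mathcal{O}_N$ exists as a $C^*$-algebra and is independent of any particular realization of its generators. I would either take this as the standard definition of $\mathcal{O}_N$ following Cuntz, or, if a self-contained argument is preferred, remark that the $S_i$ already generate a concrete $C^*$-subalgebra of $\mathcal{B}(\mathcal{H})$ satisfying the Cuntz relations, and appeal to the theorem of Cuntz that any such concretely generated algebra is $*$-isomorphic to the abstract $\mathcal{O}_N$. As a final remark, because $\mathcal{O}_N$ is simple and the isometries $S_i$ are nonzero, the kernel of $\pi$ is a proper closed two-sided ideal and hence trivial, so $\pi$ is automatically faithful; moreover $\pi$ is unital and therefore nondegenerate. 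Neither observation is needed for the statement, but together they show that the representation is a genuine copy of $\mathcal{O}_N$ acting on $L^2(X,\mu)$ rather than a degenerate one.
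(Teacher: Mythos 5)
Your proof is correct and is essentially the argument the paper intends: the corollary is stated as an immediate consequence of the preceding theorem, since the isometries $S_i$ satisfy the Cuntz relations and hence, by the universal property of $\mathcal{O}_N$, the assignment $s_i \mapsto S_i$ extends to a unital $*$-representation on $L^2(X,\mu)$. Your added remarks on faithfulness via simplicity of $\mathcal{O}_N$ are accurate but, as you note, not needed for the statement.
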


Let $\Gamma_N = \{0,..., N-1\}$.  For $k \in \mathbb{Z}_{+}$, let $\Gamma_N^k = \Gamma_N \times ... \times  \Gamma_N$, where the product is $k$ times.  If $a = (a_1,...,a_k) \in \Gamma_N^k$, where $a_j \in \{0,1,..., N-1\}$ for $1 \leq j \leq k$, define 

\begin{center} $A_k(a) = \sigma_{a_1} \circ ... \circ \sigma_{a_k} (X)$. \end{center} 

\noindent Using that (\ref{hut}) is a disjoint union, we conclude that $\{A_k(a)\}_{a \in \Gamma_N^k}$ partitions $X$ for all $k \in \mathbb{Z}_+$.  For $k \in \mathbb{Z}_+$ and $a = (a_1,..., a_k) \in \Gamma_N^k$ define,

\begin{center} $P_k(a) = S_aS_a^*$, \end{center}

\noindent where $S_a = S_{a_1} \circ ... \circ S_{a_k}$.  The Cuntz relations suggest that $P_k(a)$ is a projection on the Hilbert space $L^2(X, \mu)$.  

We state another result due to Jorgensen.  

\begin{thm} \cite{Jorgensen2} \cite{Jorgensen}  \label{uniquepvm} There exists a unique projection-valued measure, $E(\cdot)$, defined on the Borel subsets of $X$, $\mathcal{B}(X)$, taking values in the projections on  $L^2(X, \mu)$ such that, 

\begin{enumerate} 

\item $E(\cdot) = \sum_{i=0}^{N-1} S_i E(\sigma_i^{-1}(\cdot)) S_i^*$, and 

\item $E(A_k(a)) = P_k(a)$ for all $k \in \mathbb{Z}_+$ and $a \in \Gamma_N^k$.  

\end{enumerate}

\end{thm}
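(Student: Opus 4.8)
The plan is to realize $E$ as the unique fixed point of a contraction on the complete metric space of projection valued measures, mirroring Hutchinson's argument in the scalar case. Write $\mathcal{H} = L^2(X,\mu)$, let $\mathcal{P}$ denote the collection of projection valued measures on the Borel subsets of $X$ taking values in the projections on $\mathcal{H}$, and equip $\mathcal{P}$ with the generalized Kantorovich metric
\[
\rho(E,F) = \sup_{\phi \in \text{Lip}_1(X)} \left\| \int_X \phi \, dE - \int_X \phi \, dF \right\|,
\]
where $\|\cdot\|$ is the operator norm and $\int_X \phi \, dF$ is the usual spectral integral. I will rely on the fact that $(\mathcal{P},\rho)$ is a complete metric space, which is the central result the paper establishes. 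Define $\Phi: \mathcal{P} \to \mathcal{P}$ by $\Phi(F)(\Delta) = \sum_{i=0}^{N-1} S_i F(\sigma_i^{-1}(\Delta)) S_i^*$. The goal is to show $\Phi$ is a Lipschitz contraction with constant $r = \max_i r_i < 1$; the Contraction Mapping Theorem then produces a unique fixed point, which is precisely a PVM satisfying condition (1).

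First I would check that $\Phi$ maps $\mathcal{P}$ into $\mathcal{P}$. Self-adjointness of each $\Phi(F)(\Delta)$ is immediate, while idempotence and the multiplicativity axiom $\Phi(F)(\Delta_1 \cap \Delta_2) = \Phi(F)(\Delta_1)\Phi(F)(\Delta_2)$ follow by expanding the products and collapsing the double sum with the Cuntz relation $S_i^* S_j = \delta_{i,j} \mathbf{1}_{\mathcal{H}}$, using that $F$ is itself multiplicative and that $\sigma_i^{-1}$ commutes with intersections. The normalizations $\Phi(F)(\emptyset) = 0$ and $\Phi(F)(X) = \sum_i S_i S_i^* = \mathbf{1}_{\mathcal{H}}$ use $\sigma_i^{-1}(X) = X$, $F(X) = \mathbf{1}_{\mathcal{H}}$, and Cuntz relation (1); countable additivity passes through the finite sum over $i$ since each $\sigma_i^{-1}$ preserves disjointness. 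These verifications are routine.

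The heart of the argument is the contraction estimate, and this is where I expect the real work to lie. The key computation is a change-of-variables identity for the spectral integral: checking it on simple functions and extending by uniform approximation gives
\[
\int_X \phi \, d\Phi(F) = \sum_{i=0}^{N-1} S_i \left( \int_X (\phi \circ \sigma_i) \, dF \right) S_i^*.
\]
Subtracting the corresponding expression for $E$ and setting $D_i = \int_X (\phi \circ \sigma_i)\,dE - \int_X (\phi \circ \sigma_i)\,dF$, the difference becomes $\sum_i S_i D_i S_i^*$. Because the Cuntz relations force the ranges of the $S_i$ to be mutually orthogonal and to span $\mathcal{H}$, this operator is block diagonal with respect to $\mathcal{H} = \bigoplus_i \text{Ran}(S_i)$, and since each $S_i$ is an isometry one obtains the identity $\|\sum_i S_i D_i S_i^*\| = \max_i \|D_i\|$. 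Finally, $\phi \in \text{Lip}_1(X)$ implies $\phi \circ \sigma_i$ is $r_i$-Lipschitz, so $r_i^{-1}(\phi \circ \sigma_i) \in \text{Lip}_1(X)$ and $\|D_i\| \le r_i\, \rho(E,F) \le r\,\rho(E,F)$. Taking the supremum over $\phi$ yields $\rho(\Phi(E),\Phi(F)) \le r\,\rho(E,F)$, as required. I regard the operator-norm identity and the change-of-variables formula as the main obstacles.

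With the contraction in hand, the Contraction Mapping Theorem gives a unique $E \in \mathcal{P}$ with $\Phi(E) = E$, which is precisely condition (1), and uniqueness of the fixed point gives uniqueness of the PVM satisfying (1). It remains to verify condition (2), $E(A_k(a)) = P_k(a)$, which I would prove by induction on $k$ directly from the fixed point equation. The set-theoretic input is that, since the union (\ref{hut}) is disjoint and $\sigma \circ \sigma_i = \text{id}_X$ makes each $\sigma_i$ injective, one has $\sigma_i^{-1}(\sigma_{a_1}(B)) = B$ when $i = a_1$ and $\emptyset$ otherwise. Applying this with $B = A_{k-1}(a')$, where $a' = (a_2,\dots,a_k)$, collapses the fixed point equation to $E(A_k(a)) = S_{a_1} E(A_{k-1}(a')) S_{a_1}^*$; the base case $k=1$ uses $E(X) = \mathbf{1}_{\mathcal{H}}$ to give $E(\sigma_{a_1}(X)) = S_{a_1}S_{a_1}^* = P_1(a)$, and the inductive step, with $S_{a'} = S_{a_2}\cdots S_{a_k}$, gives $E(A_k(a)) = S_{a_1} S_{a'} S_{a'}^* S_{a_1}^* = S_a S_a^* = P_k(a)$, completing the proof.
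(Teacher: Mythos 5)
Your proposal is correct, and its skeleton is the same as the paper's: completeness of $(P(X),\rho)$ (which you cite, as does the paper via Theorem \ref{complete}), the map $\Phi$ as an $r$-Lipschitz contraction, the Contraction Mapping Theorem for existence and uniqueness of the fixed point satisfying condition (1), and induction on $k$ using disjointness of the union (\ref{hut}) and injectivity of the $\sigma_i$ for condition (2). Your verification that $\Phi$ maps $P(X)$ to $P(X)$ and your induction for (2) match the paper essentially line for line.

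Where you genuinely diverge is the proof of the contraction estimate, which is the heart of Theorem \ref{contraction}. The paper argues scalar-wise: it establishes $\Phi(E)_{h,h}(\Delta) = \sum_{i} E_{S_i^*h,S_i^*h}(\sigma_i^{-1}(\Delta))$, uses self-adjointness of $\int \phi\, dE$ to express the operator norm as a supremum of quadratic forms $\langle\,\cdot\, h,h\rangle$, subtracts the constants $\phi(\sigma_i(x_0))$, applies the pointwise Lipschitz bound inside each scalar integral, and resums using $\sum_i \|S_i^*h\|^2 = \|h\|^2$ (Cuntz relation (1)). You stay at the operator level: the change-of-variables identity $\int \phi\, d\Phi(F) = \sum_i S_i \left( \int (\phi\circ\sigma_i)\, dF \right) S_i^*$ together with the block-diagonal norm identity $\left\| \sum_i S_i D_i S_i^* \right\| = \max_i \|D_i\|$ (both correct, by the arguments you sketch) reduce everything to $\|D_i\| \le r_i\,\rho(E,F)$, which follows from the rescaling $r_i^{-1}(\phi\circ\sigma_i) \in \text{Lip}_1(X)$. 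Your route buys two things. First, it is sharper and structurally cleaner: the Cuntz relations enter once, through the orthogonal decomposition $\mathcal{H} = \oplus_i\, \mathrm{Ran}(S_i)$, rather than through a chain of quadratic-form estimates. Second, and more substantively, you keep the \emph{same} $\text{Lip}_1$ test function in the $E$- and $F$-integrals throughout, so every appeal to the definition of $\rho$ is airtight; the paper's printed chain, by contrast, at one point bounds $\left\| \int d(\cdot,x_0)\, dE - \int g\, dF \right\|$ by $\rho(E,F)$ where $d(\cdot,x_0)$ and $g = r^{-1}(\phi\circ\sigma_i - \phi(\sigma_i(x_0)))$ are two \emph{different} test functions — a step that does not follow from the definition of $\rho$ as stated, and whose natural repair is precisely your same-test-function argument. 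The paper's scalar approach does transfer verbatim to positive operator valued measures in Section 4, but so does yours, since your norm identity uses only the Cuntz relations on the $S_i$ and no projection structure of $E$ or $F$.
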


The main goal of this paper is to provide an alternative proof of this theorem.  In particular, we will realize the map,

$$F(\cdot) \mapsto \sum_{i=0}^{N-1} S_i F(\sigma_i^{-1}(\cdot)) S_i^*,$$

\noindent as a Lipschitz contraction on a complete metric space of projection-valued measures from $\mathcal{B}(X)$ into the projections on $L^2(X, \mu)$.  The Contraction Mapping Theorem will then guarantee the existence of a unique projection-valued measure, $E$, satisfying part $(1)$ of Theorem \ref{uniquepvm}.  Part $(2)$ of Theorem \ref{uniquepvm} will follow as a consequence.  

\section{Results:}

\subsection{A Metric Space of Projection-Valued Measures on X:}

Let $(X,d)$ be the compact metric space defined above.  Let $\mathcal{H}$ be an arbitrary Hilbert space.  

\begin{lem} \cite{Conway} \label{pvmfact1} \label{scalarmeasure} Let $F$ be a projection-valued measure from $\mathcal{B}(X)$ into the projections on $\mathcal{H}$.  Let $g,h \in \mathcal{H}$.  For all $\Delta \in\mathcal{B}(X)$ define

 $$F_{g,h}(\Delta) = \langle F(\Delta)g, h \rangle,$$ 
 
 \noindent where $\langle \cdot, \cdot \rangle$ denotes the inner product on $\mathcal{H}$.  Then $F_{g,h}(\cdot)$ defines a countably additive measure on $\mathcal{B}(X)$ with total variation less than or equal to $||g||\text{ } ||h||$.  Moreover, $F_{g,h}(\cdot) = \overline{F_{h,g}(\cdot)}$.
\end{lem}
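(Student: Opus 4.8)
The plan is to verify the three assertions in turn, drawing throughout on the defining properties of the projection valued measure $F$ and on the fact that each $F(\Delta)$ is a self-adjoint projection.

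First, for countable additivity, I would fix a sequence of pairwise disjoint Borel sets $\{\Delta_n\}_{n=1}^\infty$ and invoke property $(4)$ of the definition, which guarantees that $F\left(\bigcup_n \Delta_n\right) g = \sum_n F(\Delta_n) g$ with convergence in the norm of $\mathcal{H}$. Pairing both sides against $h$ and using the continuity of the inner product in its first slot, I would interchange the sum with the inner product to obtain $F_{g,h}\left(\bigcup_n \Delta_n\right) = \sum_n F_{g,h}(\Delta_n)$. Since property $(2)$ gives $F(\emptyset) = 0$, hence $F_{g,h}(\emptyset) = 0$, this establishes that $F_{g,h}$ is a genuine countably additive complex measure on $\mathcal{B}(X)$.

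Next, the conjugation identity $F_{g,h} = \overline{F_{h,g}}$ follows immediately from self-adjointness: for any $\Delta \in \mathcal{B}(X)$, $\overline{F_{h,g}(\Delta)} = \overline{\langle F(\Delta) h, g\rangle} = \langle g, F(\Delta) h\rangle = \langle F(\Delta)^* g, h\rangle = \langle F(\Delta) g, h\rangle = F_{g,h}(\Delta)$, where the last step uses $F(\Delta)^* = F(\Delta)$.

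The main work, and the only point I expect to require genuine care, is the total variation bound. Fix a countable Borel partition $\{\Delta_n\}$ of $X$ and write $P_n = F(\Delta_n)$. Because the $\Delta_n$ are disjoint, property $(3)$ gives $P_n P_m = F(\Delta_n \cap \Delta_m) = 0$ for $n \neq m$, so the $P_n$ are mutually orthogonal projections; combining property $(4)$ applied to $\bigcup_n \Delta_n = X$ with $F(X) = \text{id}_{\mathcal{H}}$ then yields the Pythagorean relations $\sum_n \|P_n g\|^2 = \|g\|^2$ and $\sum_n \|P_n h\|^2 = \|h\|^2$. Using that each $P_n$ is a self-adjoint idempotent, I would rewrite $\langle P_n g, h\rangle = \langle P_n g, P_n h\rangle$ and apply Cauchy--Schwarz in $\mathcal{H}$ to get $|F_{g,h}(\Delta_n)| \leq \|P_n g\|\,\|P_n h\|$. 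A second application of Cauchy--Schwarz, now for the sequences $(\|P_n g\|)_n$ and $(\|P_n h\|)_n$ in $\ell^2$, gives $\sum_n |F_{g,h}(\Delta_n)| \leq \left(\sum_n \|P_n g\|^2\right)^{1/2}\left(\sum_n \|P_n h\|^2\right)^{1/2} = \|g\|\,\|h\|$. Taking the supremum over all partitions bounds the total variation of $F_{g,h}$ by $\|g\|\,\|h\|$ and completes the proof. The delicate steps are the justification of the Pythagorean identity from the strong countable additivity in property $(4)$ and the double use of Cauchy--Schwarz that promotes the termwise projection estimate to the global bound.
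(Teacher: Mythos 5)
Your proof is correct, but there is nothing in the paper to compare it against: the lemma is stated with a citation to Conway's \emph{A Course in Functional Analysis}, and the paper supplies no proof of its own. Measured against the standard textbook argument, your route is essentially the classical one with one variation. Countable additivity from property $(4)$ plus continuity of the inner product, and the identity $F_{g,h} = \overline{F_{h,g}}$ from self-adjointness of each projection, are exactly as in the reference. For the total variation bound, Conway's argument takes a measurable partition $\{\Delta_k\}$, chooses unimodular scalars $\alpha_k$ so that $\sum_k |\langle F(\Delta_k)g,h\rangle| = \bigl\langle \sum_k \alpha_k F(\Delta_k)g, h \bigr\rangle$, applies Cauchy--Schwarz once, and then uses the Pythagorean identity $\bigl\| \sum_k \alpha_k F(\Delta_k)g \bigr\|^2 = \sum_k \|F(\Delta_k)g\|^2 = \|g\|^2$, which rests on the mutual orthogonality $F(\Delta_n)F(\Delta_m) = 0$ for disjoint sets. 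You instead write $\langle P_n g, h\rangle = \langle P_n g, P_n h\rangle$ using $P_n = P_n^* P_n$, apply Cauchy--Schwarz termwise in $\mathcal{H}$, and then a second time in $\ell^2$ to the sequences $(\|P_n g\|)_n$ and $(\|P_n h\|)_n$. Both proofs hinge on the same two facts (orthogonality of the ranges and the resulting Pythagorean relations), and yours trades the unimodular-scalar device for a double Cauchy--Schwarz; the two are equally rigorous and of comparable length. The only step you state without verification is the Pythagorean relation itself, which deserves the one-line justification $\langle P_n g, P_m g\rangle = \langle P_m P_n g, g\rangle = 0$ for $n \neq m$, after which norm convergence of $\sum_n P_n g = g$ gives $\sum_n \|P_n g\|^2 = \|g\|^2$.
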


\begin{rek} If $h \in \mathcal{H}$, $F_{h,h}(\cdot)$ is a positive measure with total mass equal to $||h||^2$.  
\end{rek}

\begin{cla} For $h \in \mathcal{H}$, the positive measure $F_{h,h}(\cdot)$ is regular on $\mathcal{B}(X)$. 
\end{cla}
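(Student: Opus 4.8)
The plan is to observe that, by Lemma \ref{scalarmeasure} and the remark following it, $E_{h,h}$ is a finite positive Borel measure on the compact metric space $X$ with total mass $||h||^2$. The claim therefore reduces to the standard fact that every finite positive Borel measure on a metric space is regular, in the sense that for each $\Delta \in \mathcal{B}(X)$ and each $\epsilon > 0$ there exist a closed set $C$ and an open set $U$ with $C \subseteq \Delta \subseteq U$ and $E_{h,h}(U \setminus C) < \epsilon$. Since $X$ is compact, $C$ is automatically compact, so this also yields inner regularity by compact sets. I would establish this by a $\sigma$-algebra argument. Writing $\mu = E_{h,h}$ for brevity, let $\mathcal{A}$ denote the collection of all Borel sets $\Delta$ admitting such an approximation, and show that $\mathcal{A}$ is a $\sigma$-algebra containing the closed sets; since the closed sets generate $\mathcal{B}(X)$, this forces $\mathcal{A} = \mathcal{B}(X)$.

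First I would check that every closed set $C$ lies in $\mathcal{A}$. Taking $C$ itself as the inner approximant, the open neighborhoods $U_n = \{x \in X : d(x,C) < 1/n\}$ decrease to $C$, so $\mu(U_n) \rightarrow \mu(C)$ by continuity from above (here finiteness of $\mu$ is essential), whence $\mu(U_n \setminus C) < \epsilon$ for $n$ large. Closure of $\mathcal{A}$ under complementation is then immediate: if $C \subseteq \Delta \subseteq U$ witnesses $\Delta \in \mathcal{A}$, then $U^c \subseteq \Delta^c \subseteq C^c$ witnesses $\Delta^c \in \mathcal{A}$, since $U^c$ is closed, $C^c$ is open, and $C^c \setminus U^c = U \setminus C$.

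The crux is closure under countable unions, which I expect to require the most care. Given $\Delta_n \in \mathcal{A}$, I would choose $C_n \subseteq \Delta_n \subseteq U_n$ with $\mu(U_n \setminus C_n) < \epsilon / 2^{n+1}$, and set $\Delta = \bigcup_n \Delta_n$ and $U = \bigcup_n U_n$, which is open. The obstacle is that $\bigcup_n C_n$ need not be closed, so it cannot serve directly as the inner approximant. To circumvent this I would pass to a finite truncation: by continuity from below, $\mu\left( \bigcup_{n=1}^{m} C_n \right) \rightarrow \mu\left( \bigcup_{n=1}^{\infty} C_n \right)$ as $m \rightarrow \infty$, so for $m$ sufficiently large the closed set $C = \bigcup_{n=1}^{m} C_n$ satisfies $\mu\left( \bigcup_n C_n \setminus C \right) < \epsilon / 2$. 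Then $C \subseteq \Delta \subseteq U$, and since $U \setminus \bigcup_n C_n \subseteq \bigcup_n (U_n \setminus C_n)$, one estimates $\mu(U \setminus C) \leq \sum_n \mu(U_n \setminus C_n) + \mu\left( \bigcup_n C_n \setminus C \right) < \epsilon / 2 + \epsilon / 2 = \epsilon$. Hence $\Delta \in \mathcal{A}$, completing the verification that $\mathcal{A}$ is a $\sigma$-algebra containing the closed sets, and the regularity of $E_{h,h}$ follows.
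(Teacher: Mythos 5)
Your proposal is correct, and it takes the same initial reduction as the paper but then does substantially more work where the paper simply cites a reference. The paper's entire proof is one sentence: it observes that the claim follows from the standard fact that positive Borel measures on metric spaces are regular, citing Billingsley. You make the identical reduction --- $E_{h,h}$ is a finite positive Borel measure on the compact metric space $X$, with total mass $\|h\|^2$ by Lemma \ref{scalarmeasure} and the remark following it --- but instead of citing the regularity theorem you prove it, via the standard ``good sets'' argument: the class $\mathcal{A}$ of Borel sets admitting closed inner and open outer approximants is a $\sigma$-algebra containing the closed sets, hence all of $\mathcal{B}(X)$. Your execution is sound: closed sets lie in $\mathcal{A}$ because the open neighborhoods $U_n = \{x : d(x,C) < 1/n\}$ decrease to $C$ and finiteness of the measure licenses continuity from above; complementation is handled by the symmetry $C^c \setminus U^c = U \setminus C$; and in the countable-union step you correctly identify and repair the one genuine subtlety, namely that $\bigcup_n C_n$ need not be closed, by truncating to a finite union and using continuity from below, with the $\epsilon/2^{n+1}$ bookkeeping summing as claimed. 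You also correctly note that compactness of $X$ upgrades closed inner approximants to compact ones. What your route buys is a self-contained proof requiring no external citation; what the paper's route buys is brevity, treating the regularity of finite Borel measures on metric spaces as known background. Both are legitimate, and your version would serve a reader who wants the paper to stand alone.
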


\begin{proof} This follows from the fact that positive Borel measures are regular on metric spaces \cite{Billingsley}. \end{proof} 

\begin{prop} \cite{Conway} \label{pvmfact2} Let $F$ be a projection-valued measure from $\mathcal{B}(X)$ into the projections on $\mathcal{H}$.  Let $\psi: X \rightarrow \mathbb{C}$ be a bounded Borel measurable function.  Then there exists a unique  bounded operator, which we denote by $\int \psi dF$, that satisfies $$\left\langle \left( \int \psi dF \right)g,h \right\rangle = \int_X \psi dF_{g,h}$$ 
\noindent  for all $g,h \in \mathcal{H}$.  Moreover, $|| \int \psi dF|| \leq ||\psi||_{\infty}$, where $||\cdot||$ denotes the operator norm, and $||\cdot||_{\infty}$ denotes the supremum norm. 
\end{prop}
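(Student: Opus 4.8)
The plan is to realize the operator $\int \psi \, dF$ through the Riesz representation theorem for bounded sesquilinear forms. First I would define a map $B : \mathcal{H} \times \mathcal{H} \to \mathbb{C}$ by
$$B(g,h) = \int_X \psi \, dF_{g,h}.$$
This is well defined because $\psi$ is bounded and Borel measurable while, by Lemma \ref{scalarmeasure}, each $F_{g,h}$ is a countably additive complex measure of finite total variation, so the integral exists. Next I would check that $B$ is sesquilinear. Since each $F(\Delta)$ is linear and the inner product is linear in its first slot and conjugate-linear in its second, for fixed $\Delta$ the scalar $F_{g,h}(\Delta) = \langle F(\Delta)g, h\rangle$ is linear in $g$ and conjugate-linear in $h$; that is, $F_{\alpha g_1 + g_2,\, h} = \alpha F_{g_1,h} + F_{g_2,h}$ and $F_{g,\, \beta h_1 + h_2} = \bar{\beta} F_{g,h_1} + F_{g,h_2}$ as measures. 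Because $\int_X \psi \, d(\cdot)$ is linear in the measure, these identities pass to $B$, which is therefore linear in $g$ and conjugate-linear in $h$.

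The key quantitative step is the estimate $|B(g,h)| \leq ||\psi||_{\infty} \, ||g|| \, ||h||$. Here I would invoke the total-variation bound from Lemma \ref{scalarmeasure}, writing
$$|B(g,h)| = \left| \int_X \psi \, dF_{g,h} \right| \leq ||\psi||_{\infty} \, |F_{g,h}|(X) \leq ||\psi||_{\infty} \, ||g|| \, ||h||,$$
where $|F_{g,h}|$ denotes the total variation measure of $F_{g,h}$. This exhibits $B$ as a bounded sesquilinear form whose bound is at most $||\psi||_{\infty}$.

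Finally I would apply the standard representation theorem: every bounded sesquilinear form on a Hilbert space is represented by a unique bounded operator $T$ via $B(g,h) = \langle Tg, h\rangle$, with the operator norm of $T$ equal to the bound of the form. Defining $\int \psi \, dF := T$ then yields both the defining identity and the estimate $|| \int \psi \, dF || \leq ||\psi||_{\infty}$. Uniqueness is immediate, since if $T_1$ and $T_2$ both represent $B$ then $\langle (T_1 - T_2)g, h\rangle = 0$ for all $g,h \in \mathcal{H}$, forcing $T_1 = T_2$.

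I expect the only genuinely delicate point to be the boundedness estimate, which rests entirely on the total-variation bound supplied by Lemma \ref{scalarmeasure}; the verification of sesquilinearity and the appeal to the representation theorem are routine.
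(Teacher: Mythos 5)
Your proposal is correct: the paper itself gives no proof of this proposition (it is quoted from \cite{Conway}), and your argument --- defining the sesquilinear form $B(g,h) = \int_X \psi \, dF_{g,h}$, bounding it by $\|\psi\|_{\infty}\|g\|\,\|h\|$ via the total-variation estimate of Lemma \ref{scalarmeasure}, and invoking the Riesz representation theorem for bounded sesquilinear forms --- is precisely the standard proof found in that reference. Nothing is missing; the well-definedness, sesquilinearity, boundedness, and uniqueness steps are all handled correctly.
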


Let $P(X)$ be the collection of all projection-valued measures from $\mathcal{B}(X)$ into the projections on $\mathcal{H}$. Define a metric $\rho$ on $P(X)$ by 

\begin{equation} \rho(E,F) = \sup_{\phi \in \text{Lip}_1(X)}\left\{ \left|\left| \int \phi dE - \int \phi dF \right|\right| \right\}, \end{equation}

\noindent where $||\cdot||$ denotes the operator norm in $\mathcal{B}(\mathcal{H})$, and $E$ and $F$ are arbitrary members of $P(X)$.  

\begin{thm} $\rho$ is a metric on $P(X)$. 
\end{thm}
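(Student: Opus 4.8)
The plan is to verify the four metric axioms, with essentially all of the work concentrated in the identity of indiscernibles. First I would check that $\rho$ is finite, so that it genuinely takes values in $[0,\infty)$. The key observation is that by Claim \ref{point} the operator $\int \lambda \, dE$ equals $\lambda\,\text{id}_{\mathcal H}$ for every constant $\lambda$, so for any $\phi \in \text{Lip}_1(X)$ the difference $\int \phi\, dE - \int \phi\, dF$ is unchanged when $\phi$ is replaced by $\phi + \lambda$. Fixing a basepoint $x_0 \in X$, I may therefore restrict the supremum to those $\phi \in \text{Lip}_1(X)$ with $\phi(x_0)=0$; for such $\phi$ the Lipschitz condition gives $|\phi(x)| = |\phi(x)-\phi(x_0)| \le d(x,x_0) \le \text{diam}(X)$, so $\|\phi\|_\infty \le \text{diam}(X)$. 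Proposition \ref{pvmfact2} then yields $\|\int \phi\, dE - \int \phi\, dF\| \le 2\,\text{diam}(X)$, a bound that is finite (since $X$ is compact) and uniform in $\phi$; hence $\rho(E,F) < \infty$.

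Non-negativity is immediate, as $\rho$ is a supremum of operator norms. Symmetry follows because the operator norm is invariant under negation, so $\|\int \phi\, dE - \int \phi\, dF\| = \|\int \phi\, dF - \int \phi\, dE\|$ for every $\phi$, whence $\rho(E,F)=\rho(F,E)$. For the triangle inequality I would apply the triangle inequality for the operator norm pointwise in $\phi$, writing $\|\int \phi\, dE - \int \phi\, dG\| \le \|\int \phi\, dE - \int \phi\, dF\| + \|\int \phi\, dF - \int \phi\, dG\| \le \rho(E,F)+\rho(F,G)$, and then take the supremum over $\phi \in \text{Lip}_1(X)$.

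The substantive part is showing $\rho(E,F)=0 \Rightarrow E=F$ (the converse being trivial). If $\rho(E,F)=0$ then $\int \phi\, dE = \int \phi\, dF$ for every $\phi \in \text{Lip}_1(X)$, and by homogeneity of the operator-valued integral in its integrand this extends to every Lipschitz function. Since the Lipschitz functions are dense in $C_{\mathbb R}(X)$ in the supremum norm and $\|\int \psi\, dF\| \le \|\psi\|_\infty$ by Proposition \ref{pvmfact2}, a standard approximation argument upgrades the equality to all $\phi \in C_{\mathbb R}(X)$. Pairing with a fixed $h \in \mathcal H$ gives $\int_X \phi\, dE_{h,h} = \int_X \phi\, dF_{h,h}$ for every continuous real-valued $\phi$; as $E_{h,h}$ and $F_{h,h}$ are positive regular Borel measures (by the regularity established above), the uniqueness clause of the Riesz representation theorem forces $E_{h,h} = F_{h,h}$. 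Finally, polarization expresses each complex measure $E_{g,h}$ as a fixed linear combination of the positive measures $E_{u,u}$, so $E_{g,h}=F_{g,h}$ for all $g,h \in \mathcal H$; equivalently $\langle E(\Delta)g,h\rangle = \langle F(\Delta)g,h\rangle$ for every Borel set $\Delta$ and all $g,h$, which gives $E(\Delta)=F(\Delta)$ and hence $E=F$.

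I expect the identity of indiscernibles to be the only real obstacle. The two delicate points are the passage from Lipschitz test functions to arbitrary continuous ones (density together with the norm bound of Proposition \ref{pvmfact2}), and the reduction from the operator-valued equality $\int\phi\,dE=\int\phi\,dF$ to equality of the scalar measures $E_{g,h}$. For the latter, invoking regularity of $E_{h,h}$ and the uniqueness in the Riesz representation theorem, and then recovering the general $E_{g,h}$ by polarization, is the cleanest route and avoids any direct handling of signed or complex measures.
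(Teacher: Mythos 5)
Your proposal is correct, and it reaches the same two-stage structure as the paper (reduce to the scalar measures $E_{h,h}$, then recover equality of operators), but the technical execution differs at both of the substantive steps. For finiteness, you exploit translation invariance via Claim \ref{point} to normalize $\phi(x_0)=0$ and then apply the bound $\left\| \int \psi \, dE \right\| \leq \|\psi\|_{\infty}$ from Proposition \ref{pvmfact2} directly; the paper instead invokes self-adjointness (Claim \ref{selfadjoint}) and estimates the quadratic form $\int_X |\phi(x)-\phi(x_0)|\, dE_{h,h}(x)$ against $\mathrm{diam}(X)$ --- both yield the same bound $2\,\mathrm{diam}(X)$, and yours is the shorter route. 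For identity of indiscernibles, the paper tests against the explicit Lipschitz approximants $f_n(x) = \max\{1-nd(x,C),0\} \downarrow \mathbf{1}_C$ for closed $C$, uses dominated convergence to get $E_{h,h}(C) = F_{h,h}(C)$ on closed sets, and then extends to all Borel sets by regularity; you instead pass from $\mathrm{Lip}_1(X)$ to all Lipschitz functions by homogeneity, to all of $C_{\mathbb{R}}(X)$ by density plus the norm bound of Proposition \ref{pvmfact2}, and then invoke the uniqueness clause of the Riesz representation theorem for regular Borel measures. Finally, to pass from $E_{h,h}=F_{h,h}$ to $E=F$, the paper uses self-adjointness of $E(\Delta)-F(\Delta)$ and the identity $\|T\| = \sup_{\|h\|=1}|\langle Th,h\rangle|$, whereas you use polarization to recover $E_{g,h}=F_{g,h}$; these are equivalent. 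What the paper's argument buys is self-containedness (the explicit approximants replace the appeal to Riesz uniqueness, and the density lemma is not needed until the completeness proof); what yours buys is brevity and a cleaner separation of concerns, outsourcing the measure-theoretic work to standard theorems. One small point worth making explicit in your write-up: the density of Lipschitz functions in $C_{\mathbb{R}}(X)$ is exactly Lemma \ref{density} of the paper, so citing it creates no circularity.
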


\begin{proof} $\text{}$

\begin{enumerate} 

\item Let $E, F \in P(X)$.  We show $\rho(E,F) < \infty$. Let $\phi \in \text{Lip}_1(X)$ and $x_0 \in X$.

$$ \left| \left| \int \phi dE - \int \phi dF \right| \right|  =  \left| \left| \int \phi dE - \phi(x_0)\text{id}_{\mathcal{H}} + \phi(x_0)\text{id}_{\mathcal{H}} - \int \phi dF \right| \right| $$

$$ = \left| \left| \int \phi dE - \int \phi(x_0) dE - \left( \int \phi dF - \int \phi(x_0) dF \right) \right| \right| $$

\begin{equation} \label{finitediam} \leq \left| \left| \int (\phi - \phi(x_0)) dE \right| \right| + \left| \left| \int \left(\phi - \phi(x_0)\right) dF \right| \right| \end{equation} 

\noindent Since $\phi - \phi(x_0) \in C_{\mathbb{R}}(X)$, $\int (\phi - \phi(x_0) dE$ and $\int(\phi - \phi(x_0) dF$ are self-adjoint operators.  Let $h \in \mathcal{H}$ with $||h||=1$. 

\begin{eqnarray*} 
\left| \left \langle \left(\int (\phi(x) - \phi(x_0)) dE \right)h, h \right \rangle \right| & = & \left| \int_X (\phi(x) - \phi(x_0)) dE_{h,h}(x) \right| \\ 
& \leq & \int_X |\phi(x) - \phi(x_0)| dE_{h,h}(x) \\
& \leq & \int_X d(x,x_0) dE_{h,h}(x) \\
& \leq & \text{diam}(X) \int_X dE_{h,h}(x) \\
& = &  \text{diam}(X) \langle E(X)h, h \rangle \\
& = & \text{diam}(X) ||h||^2 \\
& = & \text{diam}(X)\\
& < & \infty,
\end{eqnarray*}

\noindent where $\text{diam}(X)$ denotes the diameter of the metric space $X$.  This quantity is finite because $X$ is compact.  Hence, 

$$\left| \left| \int (\phi - \phi(x_0)) dE \right| \right| \leq \text{diam}(X) < \infty,$$

\noindent and similarly,

$$\left| \left| \int (\phi - \phi(x_0)) dF \right| \right| \leq \text{diam}(X) < \infty,$$

\noindent which implies that the last line of (\ref{finitediam}) is less than or equal to $2 \text{ diam}(X) < \infty$.  Since $\text{diam}(X)$ is independent of the choice of $\phi \in \text{Lip}_1(X)$, $\rho(E,F) \leq 2\text{ diam}(X) < \infty$.

\item Let $E, F \in P(X)$.  It is clear from the definition of $\rho$ that $\rho(E,F) = \rho(F, E)$.  

\item Let $E, F \in P(X)$.  We show that $\rho(E,F) = 0$ if and only if $E = F$.  The backwards direction is clear from the definition of $\rho$.  For the forwards direction, suppose that $\rho(E,F) = 0$.  We need to show that $E = F$.  That is, for all $\Delta \in \mathcal{B}(X)$, we need to show that $E(\Delta) = F(\Delta)$. Choose a closed subset $C \subseteq X$.  Define $f_n: X \rightarrow \mathbb{R}$ for $n=1, ... \infty$ by $f_n(x) = \max\{1 - nd(x,C), 0\}$.  Note that $f_n \in \text{Lip}_n(X) = \{f: X \rightarrow \mathbb{R}: |f(x) - f(y)| \leq n d(x, y) \text{ for all } x,y \in X \}$.  Therefore, $\frac{1}{n}f_n \in \text{Lip}_1(X)$.  Since $\rho(E,F) = 0$ $$\int \frac{1}{n}f_n dE = \int \frac{1}{n}f_n dF$$ for all $n$, which implies

\begin{equation} \label{equal}  \int f_n dE = \int f_n dF \end{equation} 

\noindent for all $n$.  Note that $f_n \downarrow {\bf{1}}_{C}$ pointwise.  Choose $h \in \mathcal{H}$ with $||h|| = 1$.  By the Dominated Convergence Theorem $$ E_{h,h}(C) = \int_X  {\bf{1}}_{C} dE_{h,h} = \lim_{n \rightarrow \infty} \int_X f_n dE_{h,h},$$ and $$ F_{h,h}(C) = \int_X  {\bf{1}}_{C} dF_{h,h} = \lim_{n \rightarrow \infty} \int_X f_n dF_{h,h}.$$  By (\ref{equal}) 

$$\int_X f_n dE_{h,h} = \int_X f_n dF_{h,h}$$ 

\noindent for all n, and hence, $E_{h,h}(C) = F_{h,h}(C)$ for all closed sets $C \subseteq X$.  Since $E_{h,h}(\cdot)$ and $F_{h,h}(\cdot)$ are regular measures, $E_{h,h}(\Delta) = F_{h,h}(\Delta)$, or equivalently, $\langle (E(\Delta) - F(\Delta)) h, h \rangle =  0$ for all $\Delta \in \mathcal{B}(X)$.  Since $E(\Delta) - F(\Delta)$ is a self-adjoint operator (being the difference of two projections), and since $h$ was arbitrary, 

$$||E(\Delta) - F(\Delta)|| = \sup_{h \in \mathcal{H}, ||h||=1} |\langle (E(\Delta) - F(\Delta)) h, h \rangle| = 0.$$

\noindent Therefore, $E(\Delta) = F(\Delta)$ for all $\Delta \in \mathcal{B}(X)$.  

\item Let $E,F,G \in P(X)$.  We need to show that $\rho$ satisfies: \begin{equation} \label{triangle} \rho(E,G) \leq \rho(E,F) + \rho(F,G). \end{equation}

\noindent Choose $\phi \in \text{Lip}_1(X)$.  Then, $$\left| \left| \int \phi dE - \int \phi dG \right| \right| \leq \left| \left| \int \phi dE - \int \phi dF \right| \right| + \left| \left| \int \phi dF - \int \phi dG \right| \right|.$$  

\noindent By taking the supremum of both sides over all $\text{Lip}_1(X)$ functions, (\ref{triangle}) follows.  

\end{enumerate}

\end{proof}

\begin{cor} The metric space $(P(X), \rho)$ is bounded. 

\begin{proof} In $(1)$ of the above proof, we showed that for any $E, F \in P(X)$, $\rho(E,F) \leq 2\text{diam}(X) < \infty$.
\end{proof}

\end{cor}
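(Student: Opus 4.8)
The plan is to prove boundedness directly, by exhibiting a single finite constant that dominates $\rho(E,F)$ for \emph{every} pair $E,F \in P(X)$; recall that a metric space is bounded exactly when its diameter is finite. The natural candidate is $2\,\text{diam}(X)$, which is finite precisely because $X$ is compact. In fact this uniform bound is already contained in part $(1)$ of the proof that $\rho$ is a metric, so the corollary amounts to isolating that estimate; I sketch the reasoning for completeness.

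First I would fix arbitrary $E,F \in P(X)$, an arbitrary $\phi \in \text{Lip}_1(X)$, and a basepoint $x_0 \in X$. The key manoeuvre is to center $\phi$ at $x_0$: by Claim \ref{point} the constant $\phi(x_0)$ integrates against any projection valued measure to $\phi(x_0)\,\text{id}_{\mathcal{H}}$, so adding and subtracting this term and applying the triangle inequality for the operator norm reduces the estimate of $\|\int \phi\,dE - \int \phi\,dF\|$ to bounding $\|\int(\phi-\phi(x_0))\,dE\|$ and $\|\int(\phi-\phi(x_0))\,dF\|$ individually.

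Next, since $\phi - \phi(x_0)$ is real valued and continuous, Claim \ref{selfadjoint} makes each of these operators self-adjoint, so its norm equals $\sup_{\|h\|=1}|\langle(\int(\phi-\phi(x_0))\,dE)h,h\rangle|$. For a unit vector $h$ I would express this inner product as $\int_X(\phi(x)-\phi(x_0))\,dE_{h,h}(x)$, where $E_{h,h}$ is a positive measure of total mass $\|h\|^2 = 1$; bounding the integrand pointwise by the $1$-Lipschitz inequality $|\phi(x)-\phi(x_0)| \leq d(x,x_0) \leq \text{diam}(X)$ then yields a bound of $\text{diam}(X)$, and likewise for $F$. Taking the supremum over $\phi \in \text{Lip}_1(X)$ gives $\rho(E,F) \leq 2\,\text{diam}(X)$ uniformly in $E$ and $F$, which is the desired bound. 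I do not anticipate any real obstacle here: the computation is elementary once the centering step is in place, and the only point deserving attention is that self-adjointness (hence the $\sup_{\|h\|=1}$ formula for the norm) and the Lipschitz bound both rely on having subtracted the constant $\phi(x_0)$ first.
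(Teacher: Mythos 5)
Your proposal is correct and matches the paper exactly: the paper's proof of this corollary is precisely the observation that part $(1)$ of the metric proof already established $\rho(E,F) \leq 2\,\mathrm{diam}(X) < \infty$ uniformly in $E$ and $F$, and your re-derivation of that estimate (centering at $x_0$, self-adjointness, the $E_{h,h}$ mass bound) is the same argument the paper uses there.
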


\subsection{$(P(X), \rho)$ is Complete:} 

We show that the metric space $(P(X), \rho)$ is complete.  We begin with several facts.  

\begin{defi} A representation $\pi: C(X) \rightarrow \mathcal{B}(\mathcal{H})$ is a $*$-homomorphism that preserves the identity.
\end{defi}

\begin{thm} \cite{Conway} \label{repcor1} Let $E: \mathcal{B}(X) \rightarrow \mathcal{B}(\mathcal{H})$ be a projection-valued measure.  The map $\pi: C(X) \rightarrow \mathcal{B}(\mathcal{H})$ given by $$f \mapsto \int f dE$$

\noindent is a representation.  

\end{thm}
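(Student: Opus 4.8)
The plan is to verify in turn the four defining properties of a representation: linearity, preservation of the identity, compatibility with the adjoint, and multiplicativity. Throughout I would use the characterizing identity $\langle (\int \psi dE) g, h\rangle = \int_X \psi dE_{g,h}$ from Proposition \ref{pvmfact2}, together with the uniqueness of the operator $\int \psi dE$, to convert statements about operators into statements about the scalar measures $E_{g,h}$, where they reduce to elementary facts about Lebesgue integration.

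Linearity and preservation of the identity are the routine steps. For linearity, given $f_1, f_2 \in C(X)$ and scalars $\alpha, \beta$, the integral $\int_X (\alpha f_1 + \beta f_2) dE_{g,h}$ splits by linearity of the scalar integral; matching this against the pairing of $\alpha \pi(f_1) + \beta \pi(f_2)$ with $g,h$ forces $\pi(\alpha f_1 + \beta f_2) = \alpha \pi(f_1) + \beta \pi(f_2)$ by uniqueness. That $\pi$ preserves the identity is exactly Claim \ref{point} with $\lambda = 1$, since $\int \mathbf{1} dE = E(X) = \text{id}_{\mathcal{H}}$. For the $*$-structure, I would compute $\langle \pi(f)^* g, h\rangle = \overline{\langle \pi(f) h, g\rangle} = \overline{\int_X f dE_{h,g}}$ and then invoke the relation $E_{g,h} = \overline{E_{h,g}}$ from Lemma \ref{scalarmeasure} to identify this with $\int_X \bar f dE_{g,h} = \langle \pi(\bar f) g, h\rangle$, yielding $\pi(f)^* = \pi(\bar f)$.

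The main obstacle is multiplicativity, $\pi(fg) = \pi(f)\pi(g)$. I would first establish it on characteristic functions, where it is precisely property $(3)$ of the projection valued measure: $\int \mathbf{1}_{\Delta_1} dE \cdot \int \mathbf{1}_{\Delta_2} dE = E(\Delta_1) E(\Delta_2) = E(\Delta_1 \cap \Delta_2) = \int \mathbf{1}_{\Delta_1 \cap \Delta_2} dE$. Writing two simple functions over a common refinement of their defining partitions and using linearity then upgrades this to $\int st dE = (\int s dE)(\int t dE)$ for all simple $s,t$.

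Finally I would pass to the limit. Since $X$ is compact, every $f \in C(X)$ is a uniform limit of simple functions $s_n$, and the estimate $\|\int \psi dE\| \le \|\psi\|_\infty$ from Proposition \ref{pvmfact2}, applied to $\psi = f - s_n$, shows $\pi(s_n) \to \pi(f)$ in operator norm. Choosing simple $s_n \to f$ and $t_n \to g$ uniformly, I also have $s_n t_n \to fg$ uniformly, hence $\pi(s_n t_n) \to \pi(fg)$; on the other hand $\pi(s_n)\pi(t_n) \to \pi(f)\pi(g)$. Combining these with the simple-function identity gives $\pi(fg) = \pi(f)\pi(g)$. The one point requiring care is the joint continuity of operator multiplication, which I would handle with the factorization $\pi(s_n)\pi(t_n) - \pi(f)\pi(g) = \pi(s_n)(\pi(t_n) - \pi(g)) + (\pi(s_n) - \pi(f))\pi(g)$ together with the uniform bound $\|\pi(s_n)\| \le \|s_n\|_\infty$, which stays bounded because $s_n$ converges uniformly.
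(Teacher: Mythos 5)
The paper gives no proof of this theorem at all — it is quoted directly from Conway — so there is no internal argument to compare against, and your proposal should be judged as the standard textbook proof, which it reproduces correctly. Your argument is sound and follows essentially the approach of the cited source: linearity, preservation of the identity, and the $*$-property reduce to the scalar measures $E_{g,h}$ via Proposition \ref{pvmfact2} and Lemma \ref{scalarmeasure}; multiplicativity is first checked on characteristic functions using $E(\Delta_1 \cap \Delta_2) = E(\Delta_1)E(\Delta_2)$, upgraded to simple functions by linearity, and then extended to all of $C(X)$ by uniform approximation together with the bound $\left\| \int \psi \, dE \right\| \leq \|\psi\|_{\infty}$ and the factorization handling joint continuity of the product.
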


\begin{thm} \cite{Conway} \label{repcor2} Let $\pi: C(X) \rightarrow \mathcal{B}(\mathcal{H})$ be a representation.  There exists a unique projection-valued measure $E: \mathcal{B}(X) \rightarrow \mathcal{B}(\mathcal{H})$ such that

$$\pi(f) = \int f dE$$

\noindent for all $f \in C(X)$. 

\end{thm}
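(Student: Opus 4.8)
The plan is to reverse-engineer the projection valued measure from the scalar measures attached to $\pi$, mirroring the construction in Lemma \ref{scalarmeasure} and Proposition \ref{pvmfact2}. First I would fix $g,h \in \mathcal{H}$ and consider the linear functional $L_{g,h}: C(X) \to \mathbb{C}$ given by $L_{g,h}(f) = \langle \pi(f) g, h\rangle$. Since a unital $*$-homomorphism between $C^*$-algebras is contractive, $|L_{g,h}(f)| \leq \|f\|_{\infty}\|g\|\,\|h\|$, so $L_{g,h}$ is bounded. The Riesz Representation Theorem then yields a unique complex Borel measure $\mu_{g,h}$ on $X$ with $\langle \pi(f)g, h\rangle = \int_X f\, d\mu_{g,h}$ for all $f \in C(X)$ and total variation at most $\|g\|\,\|h\|$. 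By the uniqueness clause, $(g,h) \mapsto \mu_{g,h}(\Delta)$ is sesquilinear and bounded by $\|g\|\,\|h\|$ for each fixed Borel set $\Delta$, so the Riesz lemma for bounded sesquilinear forms produces a unique operator $E(\Delta) \in \mathcal{B}(\mathcal{H})$ with $\langle E(\Delta) g, h\rangle = \mu_{g,h}(\Delta)$. This defines the candidate $E$, and what remains is to verify the four axioms of a projection valued measure together with uniqueness.

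The heart of the argument, and the step I expect to be the main obstacle, is establishing multiplicativity $E(\Delta_1)E(\Delta_2) = E(\Delta_1 \cap \Delta_2)$, which is where the algebra structure of $\pi$ must be exploited. I would proceed in two stages. Self-adjointness of each $E(\Delta)$ follows from $\pi(\bar f) = \pi(f)^*$, which forces $\mu_{h,g} = \overline{\mu_{g,h}}$ and hence $E(\Delta)^* = E(\Delta)$, exactly as in the identity $F_{g,h} = \overline{F_{h,g}}$ of Lemma \ref{scalarmeasure}. For the product, I would first use $\pi(f\psi) = \pi(f)\pi(\psi)$ to show, for continuous $\psi$, that $\int f\,d\mu_{\pi(\psi)g,h} = \int f\psi\,d\mu_{g,h}$ for every continuous $f$; uniqueness in Riesz then gives the measure identity $d\mu_{\pi(\psi)g,h} = \psi\,d\mu_{g,h}$, whence $\langle E(\Delta)\pi(\psi)g,h\rangle = \int_{\Delta} \psi\,d\mu_{g,h}$. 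Combining this with the self-adjointness relation shows that $E(\Delta)$ commutes with every $\pi(\psi)$. Finally, for a fixed Borel set $\Delta_2$ I would compute the measure $\mu_{E(\Delta_2)g,h}$: testing against continuous $f$ and using commutation gives $\int f\,d\mu_{E(\Delta_2)g,h} = \langle E(\Delta_2)\pi(f)g,h\rangle = \int_{\Delta_2} f\,d\mu_{g,h}$, so $\mu_{E(\Delta_2)g,h} = \mathbf{1}_{\Delta_2}\,\mu_{g,h}$. Evaluating at an arbitrary Borel set $\Delta_1$ yields $\langle E(\Delta_1)E(\Delta_2)g,h\rangle = \mu_{g,h}(\Delta_1 \cap \Delta_2) = \langle E(\Delta_1 \cap \Delta_2)g,h\rangle$, proving multiplicativity. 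Taking $\Delta_1 = \Delta_2 = \Delta$ gives $E(\Delta)^2 = E(\Delta)$, so each $E(\Delta)$ is an orthogonal projection.

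The remaining axioms are comparatively routine. Normalization is immediate: $\langle E(\emptyset)g,h\rangle = \mu_{g,h}(\emptyset) = 0$, and $\langle E(X)g,h\rangle = \int_X 1\, d\mu_{g,h} = \langle \pi(1)g,h\rangle = \langle g,h\rangle$ since $\pi$ preserves the identity, so $E(X) = \text{id}_{\mathcal{H}}$. For countable additivity over pairwise disjoint $\{\Delta_n\}$, the scalar measures give $\langle E(\bigcup_n \Delta_n)g,h\rangle = \sum_n \langle E(\Delta_n)g,h\rangle$, i.e. weak additivity; to upgrade this to the norm convergence demanded by the definition I would use that disjointness forces $E(\Delta_n)E(\Delta_m) = E(\emptyset) = 0$ for $n \neq m$, so the vectors $E(\Delta_n)h$ are mutually orthogonal and $\sum_n \|E(\Delta_n)h\|^2 = \sum_n \langle E(\Delta_n)h,h\rangle \leq \|h\|^2$ is finite. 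The partial sums $\sum_{n=1}^{N} E(\Delta_n)h$ are therefore Cauchy in norm, and their norm limit must coincide with the weak limit $E(\bigcup_n\Delta_n)h$. For uniqueness, if $E'$ is another projection valued measure with $\int f\,dE' = \pi(f)$ for all $f \in C(X)$, then $E'_{g,h}$ and $E_{g,h}$ are complex Borel measures agreeing on $C(X)$, hence coincide by Riesz uniqueness; therefore $E' = E$. This completes the proof.
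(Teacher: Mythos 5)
Your proof is correct. Note that the paper does not actually prove this theorem --- it is quoted from Conway as a known result --- so the comparison here is with the standard textbook argument rather than with an internal proof. Your construction follows the same skeleton as Conway's: obtain the scalar measures $\mu_{g,h}$ from the Riesz representation theorem applied to $f \mapsto \langle \pi(f)g,h\rangle$, assemble $E(\Delta)$ from the bounded sesquilinear forms $(g,h) \mapsto \mu_{g,h}(\Delta)$, and verify the axioms. Where the approaches genuinely diverge is multiplicativity: Conway extends $\pi$ to a $*$-homomorphism on the algebra of bounded Borel functions (a Borel functional calculus) and then defines $E(\Delta)$ as the image of $\mathbf{1}_{\Delta}$, so that $E(\Delta_1)E(\Delta_2) = E(\Delta_1 \cap \Delta_2)$ comes for free from multiplicativity of the extension; you instead prove it directly from the two measure identities $d\mu_{\pi(\psi)g,h} = \psi\, d\mu_{g,h}$ and $\mu_{E(\Delta_2)g,h} = \mathbf{1}_{\Delta_2}\,\mu_{g,h}$, the second obtained from the first via the commutation of $E(\Delta)$ with the range of $\pi$. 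This is a legitimate and somewhat leaner route, since it never needs the full functional calculus. Two points you should make explicit: first, every appeal to ``Riesz uniqueness'' (in deriving $\mu_{E(\Delta_2)g,h} = \mathbf{1}_{\Delta_2}\,\mu_{g,h}$, and in the final uniqueness argument identifying $E'_{g,h}$ with $\mu_{g,h}$) is a statement about \emph{regular} measures, so you need the measures involved to be regular; this is automatic here because $X$ is a compact metric space (the same fact the paper invokes for the measures $E_{h,h}$), but it is a hypothesis being used. Second, in the countable-additivity upgrade, the orthogonality of the vectors $E(\Delta_n)h$ and the identity $\|E(\Delta_n)h\|^2 = \langle E(\Delta_n)h,h\rangle$ both rely on the $E(\Delta)$ already being self-adjoint idempotents, so the order of your verification (projection property before $\sigma$-additivity) is essential --- you have it in the right order, but it is worth flagging.
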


\begin{lem} \label{density} $\text{Lip}(X)$ is dense in $C_{\mathbb{R}}(X)$, where $\text{Lip}(X)$ is the collection of real valued Lipschitz functions on $X$.

\end{lem}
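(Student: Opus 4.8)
The plan is to give a constructive proof by Lipschitz regularization (inf-convolution). Fix $f \in C_{\mathbb{R}}(X)$ and $\epsilon > 0$; the goal is to produce a Lipschitz function within $\epsilon$ of $f$ in the supremum norm. For each $n \in \mathbb{Z}_+$ define
$$f_n(x) = \inf_{y \in X} \{ f(y) + n\, d(x,y) \}.$$
Since $X$ is compact and $f$ is continuous, $f$ is bounded, so each $f_n$ is a well-defined real-valued function, and I would show that the family $\{f_n\}$ consists of Lipschitz functions converging uniformly to $f$.

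First I would verify that $f_n$ is $n$-Lipschitz: for $x, x' \in X$ and any $y$, the triangle inequality $d(x,y) \leq d(x,x') + d(x',y)$ gives $f(y) + n\,d(x,y) \leq n\,d(x,x') + (f(y) + n\,d(x',y))$; taking the infimum over $y$ yields $f_n(x) \leq n\,d(x,x') + f_n(x')$, and by symmetry $|f_n(x) - f_n(x')| \leq n\,d(x,x')$, so $f_n \in \text{Lip}(X)$. Next, taking $y = x$ in the infimum shows $f_n(x) \leq f(x)$ for every $x$, so $f_n \leq f$ pointwise.

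The crux is the uniform convergence $f_n \to f$. Because $X$ is compact, $f$ is uniformly continuous, so for the given $\epsilon > 0$ choose $\delta > 0$ with $|f(x) - f(y)| < \epsilon$ whenever $d(x,y) < \delta$. For fixed $x$, split the infimum defining $f_n(x)$ into the contributions from $y$ with $d(x,y) < \delta$ and from $y$ with $d(x,y) \geq \delta$. On the near region $f(y) + n\,d(x,y) \geq f(x) - \epsilon$; on the far region $f(y) + n\,d(x,y) \geq \inf_X f + n\delta$, which exceeds $f(x)$ once $n$ is large enough that $n\delta > \sup_X f - \inf_X f$. Hence for all such $n$ and all $x$ simultaneously, $f_n(x) \geq f(x) - \epsilon$, and combined with $f_n \leq f$ this gives $\| f_n - f \|_\infty \leq \epsilon$. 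Choosing $n$ this large exhibits a Lipschitz function within $\epsilon$ of $f$, proving density.

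I expect the uniform-convergence step to be the only real obstacle, and the key to it is using compactness twice: once to guarantee uniform continuity (so a single $\delta$ works uniformly in $x$) and once to bound $f$ from below (so the far contribution can be dominated by the single threshold $n\delta$). As an alternative, one could instead invoke the Stone--Weierstrass theorem, observing that $\text{Lip}(X)$ is a point-separating subalgebra of $C_{\mathbb{R}}(X)$ containing the constants---closure under products uses that Lipschitz functions on a compact space are bounded, and point separation follows since $x \mapsto d(x,x_0)$ is $1$-Lipschitz---but the regularization argument has the advantage of producing explicit approximants.
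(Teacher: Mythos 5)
Your proof is correct. There is, however, nothing in the paper to compare it against: the lemma is stated with a citation to an outside source \cite{Latre} (a seminar) and no proof appears in the text, so your argument fills a genuine omission rather than diverging from an internal proof. The inf-convolution $f_n(x)=\inf_{y\in X}\{f(y)+n\,d(x,y)\}$ is the standard regularization, and each step you give is sound: the triangle-inequality argument shows $f_n$ is $n$-Lipschitz, taking $y=x$ gives $f_n\le f$, and on the far region $d(x,y)\ge\delta$ the bound $\inf_X f+n\delta>\sup_X f\ge f(x)$ (valid once $n\delta>\sup_X f-\inf_X f$) makes the lower bound $f_n\ge f-\epsilon$ uniform in $x$; compactness enters exactly where you say it does, giving uniform continuity and boundedness of $f$. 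The Stone--Weierstrass alternative you sketch is also valid (boundedness of Lipschitz functions on a compact space gives closure under products, and $x\mapsto d(x,x_0)$ separates points), but your constructive argument is the better fit for this paper: in Theorem \ref{complete} and in Lemma \ref{positive cauchy} the lemma is always invoked by choosing an approximant $g$ together with an explicit Lipschitz constant $K$ so that $g/K\in\text{Lip}_1(X)$, and your approximants come equipped with the explicit constant $n$ (and, as a bonus, increase monotonically to $f$ from below).
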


We now state the main result of this section. 

\begin{thm} \label{complete} The metric space $(P(X), \rho)$ is complete.  \end{thm}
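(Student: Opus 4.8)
The plan is to exploit the correspondence between projection valued measures and representations of $C(X)$ furnished by Theorems \ref{repcor1} and \ref{repcor2}. The assignment $E \mapsto \pi_E$, where $\pi_E(f) = \int f dE$, sends each member of $P(X)$ to a representation of $C(X)$ on $\mathcal{H}$, and by Theorem \ref{repcor2} this assignment is a bijection onto the set of such representations. Thus, given a $\rho$-Cauchy sequence $\{E_n\} \subseteq P(X)$, I would first produce a limiting representation $\pi$, recover the associated projection valued measure $E$ via Theorem \ref{repcor2}, and finally verify that $E_n \to E$ in $\rho$.

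To construct $\pi$, fix $\phi \in \text{Lip}_1(X)$. By the definition of $\rho$ we have $|| \int \phi dE_n - \int \phi dE_m || \leq \rho(E_n, E_m)$, so $\{\int \phi dE_n\}$ is Cauchy in the Banach space $\mathcal{B}(\mathcal{H})$ and converges to some operator. Dividing a general Lipschitz function by its Lipschitz constant puts it in $\text{Lip}_1(X)$, so this convergence extends to every $\phi \in \text{Lip}(X)$ (constant functions being handled directly by Claim \ref{point}), and the limits define a linear map $\pi$ on the real vector space $\text{Lip}(X)$. Proposition \ref{pvmfact2} gives $|| \int \phi dE_n || \leq ||\phi||_{\infty}$, hence in the limit $||\pi(\phi)|| \leq ||\phi||_{\infty}$; since $\text{Lip}(X)$ is dense in $C_{\mathbb{R}}(X)$ by Lemma \ref{density} and $\mathcal{B}(\mathcal{H})$ is complete, $\pi$ extends uniquely to a bounded linear map on $C_{\mathbb{R}}(X)$, and then to all of $C(X)$ by complex linearity.

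Next I would verify that $\pi$ is a representation. It preserves the identity because $\int 1 dE_n = E_n(X) = \text{id}_{\mathcal{H}}$ for every $n$ by Claim \ref{point}. It is $*$-preserving because, for real-valued $\phi$, each $\int \phi dE_n$ is self-adjoint by Claim \ref{selfadjoint} and the adjoint operation is norm-continuous, so $\pi(\phi)$ is self-adjoint and the complex case follows by linearity. Multiplicativity is the step requiring the most care: for $f, g \in \text{Lip}(X)$ the product $fg$ is again Lipschitz, each $\pi_{E_n}$ is multiplicative by Theorem \ref{repcor1}, and joint continuity of operator multiplication on norm-bounded sequences lets me pass to the limit and obtain $\pi(fg) = \pi(f)\pi(g)$; density and continuity then extend this to all of $C(X)$. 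With $\pi$ a representation, Theorem \ref{repcor2} yields a unique $E \in P(X)$ satisfying $\int f dE = \pi(f)$ for all $f \in C(X)$.

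Finally, to see that $E_n \to E$, fix $\epsilon > 0$ and choose $N$ with $\rho(E_n, E_m) < \epsilon$ for all $n, m \geq N$. For any $\phi \in \text{Lip}_1(X)$ and any $n \geq N$, letting $m \to \infty$ in $|| \int \phi dE_n - \int \phi dE_m || \leq \epsilon$ gives $|| \int \phi dE_n - \int \phi dE || \leq \epsilon$, since $\int \phi dE_m \to \pi(\phi) = \int \phi dE$ in operator norm. Taking the supremum over $\phi \in \text{Lip}_1(X)$ yields $\rho(E_n, E) \leq \epsilon$ for all $n \geq N$, so $E_n \to E$ and $(P(X), \rho)$ is complete. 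I expect the main obstacle to be the construction and verification of the limiting representation $\pi$, and in particular the multiplicativity step, rather than the closing convergence estimate, which is essentially formal once $E$ is in hand.
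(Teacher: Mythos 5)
Your proposal is correct and follows essentially the same route as the paper: pass to the representations $\pi_n(f) = \int f \, dE_n$ via Theorem \ref{repcor1}, construct a limiting representation $\pi$, recover $E$ from Theorem \ref{repcor2}, and close with the identical estimate obtained by letting $m \to \infty$ in $\left|\left| \int \phi \, dE_n - \int \phi \, dE_m \right|\right| \leq \epsilon$. The only difference is one of bookkeeping: you define $\pi$ on $\text{Lip}(X)$, where Cauchyness is immediate from the definition of $\rho$, and then extend by density (Lemma \ref{density}) using the bound $||\pi(\phi)|| \leq ||\phi||_{\infty}$, whereas the paper uses that same density lemma to show $\{\pi_n(f)\}$ is Cauchy for every $f \in C(X)$ directly and defines $\pi$ pointwise; both constructions produce the same representation.
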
 

\begin{proof} Let $\{E_n\}_{n=1}^{\infty} \subseteq P(X)$ be a Cauchy sequence of projection-valued measures in the $\rho$ metric.  For each $n = 1,2,...$, use Theorem \ref{repcor1} to define a representation $\pi_n: C(X) \rightarrow \mathcal{B}(\mathcal{H})$ by $$ f \mapsto \int f dE_n.$$

\begin{cla} \label{cauchy} Let $f \in C(X)$.  The sequence of operators $\{\pi_n(f)\}_{n=1}^{\infty}$ is Cauchy in the operator norm.   

\end{cla}

\noindent {\it{Proof of claim.}} Let $\epsilon > 0$.  Let $f = f_1 + if_2$, where $f_1, f_2 \in C_{\mathbb{R}}(X)$. By Lemma \ref{density}, choose $g_1, g_2 \in \text{Lip}(X)$ such that $||f_1-g_1||_{\infty} \leq\frac{\epsilon}{6}$ and $||f_2-g_2||_{\infty} \leq \frac{\epsilon}{6}$.  

There is a $K > 0$ such that $\frac{1}{K}g_1 \in \text{Lip}_1(X)$ and $\frac{1}{K}g_2 \in \text{Lip}_1(X)$.  Since $\{E_n\}_{n=1}^{\infty}$ is a Cauchy sequence in the $\rho$ metric, the sequence $\{\pi_n(\frac{1}{K}g_1)\}_{n=1}^{\infty}$ is Cauchy in the operator norm, and hence, $\{\pi_n(g_1)\}_{n=1}^{\infty}$ is Cauchy in the operator norm.  Similarly, $\{\pi_n(g_2)\}_{n=1}^{\infty}$ is Cauchy in the operator norm.  Therefore, choose $N$ such that for $n,m \geq N$, $$||\pi_n(g_1) - \pi_m(g_1)|| \leq \frac{\epsilon}{6} \text{ and } ||\pi_n(g_2) - \pi_m(g_2)|| \leq \frac{\epsilon}{6}.$$  If $m,n \geq N$, 

\begin{eqnarray*}
||\pi_n(f_1) - \pi_m(f_1)|| & \leq & ||\pi_n(f_1) - \pi_n(g_1) || + ||\pi_n(g_1) - \pi_m(g_1)|| \\
			      & + & ||\pi_m(g_1) - \pi_m(f)|| \\
			      & \leq & ||\pi_n(f_1-g_1)|| + \frac{\epsilon}{6} + ||\pi_m(f_1-g_1)|| \\
			      & \leq & \frac{\epsilon}{2}, \end{eqnarray*} 
			      
\noindent where the third inequality is because $||\pi_n(f_1-g_1)|| \leq ||f_1-g_1||_{\infty}$ and $||\pi_m(f_1-g_1)|| \leq ||f_1-g_1||_{\infty}$.  Similarly, $||\pi_n(f_2) - \pi_m(f_2)|| \leq \frac{\epsilon}{2}$.  Then, if $n, m \geq N$, 

\begin{eqnarray*} 
||\pi_n(f) - \pi_m(f)|| & = & ||\pi_n(f_1 + if_2) - \pi_m(f_1 +if_2)|| \\
				  & = & ||(\pi_n(f_1) - \pi_m(f_1)) +i(\pi_n(f_2) + \pi_m(f_2))|| \\
				  & \leq & ||\pi_n(f_1) - \pi_m(f_1)|| + ||\pi_n(f_2) - \pi_m(f_2)|| \\
				  & \leq & \epsilon. \end{eqnarray*}
				  
This proves the claim.  \\

Define $\pi:C(X) \rightarrow \mathcal{B}(\mathcal{H})$ by $f \mapsto \lim_{n \rightarrow \infty} \pi_n(f).$  This map is well defined by Claim \ref{cauchy}, and the fact that $\mathcal{B}(\mathcal{H})$ is complete in the operator norm.  Moreover, it is a representation.  By Theorem \ref{repcor2}, there exists a unique projection-valued measure $E: \mathcal{B}(X) \rightarrow \mathcal{B}(\mathcal{H})$ such that

$$\pi(f) = \int f dE$$

\noindent for all $f \in C(X)$. We show that $E_n \rightarrow E$ in the $\rho$ metric as $n \rightarrow \infty$.  Let $\epsilon > 0$.  Choose $N$ such that for $n,m \geq N$

$$\rho(E_n, E_m) \leq \epsilon.$$

\noindent Let $n \geq N$ and $\phi \in \text{Lip}_1(X)$.  Observe 

\begin{eqnarray*} 
\left| \left| \int \phi dE_n - \int \phi dE \right| \right| & = & 
\lim_{m \rightarrow \infty} \left| \left| \int \phi dE_n - \int \phi dE_m \right| \right| \\
& \leq \epsilon, \end{eqnarray*}

\noindent where the equality is because $\lim_{m \rightarrow \infty} \int \phi dE_m = \lim_{m \rightarrow \infty} \pi_m(\phi) = \pi(\phi) = \int \phi dE,$ and the inequality is because $\rho(E_n, E_m) \leq \epsilon$.  Since the choice of $N$ is independent of the choice of $\phi$, we have for $n \geq N$

$$\rho(E_n,E) = \sup_{\phi \in \text{Lip}_1(X)}\left\{ \left|\left| \int \phi dE_n - \int \phi dE \right|\right| \right\} \leq \epsilon.$$

Hence, $E_n \rightarrow E$ in the $\rho$ metric as $n \rightarrow \infty$, and the metric space $(P(X), \rho)$ is complete. 

\end{proof}

We now define a weak topology on $P(X)$.  

\begin{defi} A sequence of projection-valued measures $\{F_n\}_{n=1}^{\infty} \subseteq P(X)$ converges weakly to a projection-valued measure $F \in P(X)$, written $F_n \Rightarrow F$, if for all $f \in C_{\mathbb{R}}(X)$, $\int {f} dF_n \rightarrow \int {f} dF$, where convergence is in the operator norm on $\mathcal{B}(\mathcal{H})$.
\end{defi}

\begin{thm} The topology induced by the $\rho$ metric on $P(X)$ coincides with the weak topology on $P(X)$. 
\end{thm}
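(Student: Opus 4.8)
The plan is to prove that the two topologies have the same convergent sequences and then appeal to first countability. The $\rho$ topology is metric, hence first countable, and---defining the weak topology on $P(X)$ by the same sub-basic neighborhoods used for $M(X)$ in the Preliminaries, with the scalar $\hat{f}$ replaced by the operator $\int f\, dF$---the separability of $C_{\mathbb{R}}(X)$ makes the weak topology first countable as well. Since a first countable topology is determined by its convergent sequences, it suffices to show that for $\{F_n\} \subseteq P(X)$ and $F \in P(X)$ one has $\rho(F_n,F) \to 0$ if and only if $F_n \Rightarrow F$.

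For the implication $\rho(F_n,F) \to 0 \Rightarrow F_n \Rightarrow F$, which mirrors the estimate in Claim \ref{cauchy}, I would fix $f \in C_{\mathbb{R}}(X)$ and $\epsilon > 0$, use Lemma \ref{density} to choose $g \in \text{Lip}(X)$ with $\|f-g\|_\infty$ small, and pick $K>0$ so that $\tfrac{1}{K}g \in \text{Lip}_1(X)$. Splitting $\|\int f\, dF_n - \int f\, dF\|$ by the triangle inequality through $\int g\, dF_n$ and $\int g\, dF$, the two outer terms are at most $\|f-g\|_\infty$ by the norm bound $\|\int \psi\, dF\| \le \|\psi\|_\infty$ of Proposition \ref{pvmfact2}, while the middle term equals $K\|\int \tfrac1K g\, dF_n - \int \tfrac1K g\, dF\| \le K\rho(F_n,F) \to 0$.

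The substantive direction is $F_n \Rightarrow F \Rightarrow \rho(F_n,F) \to 0$, where the obstacle is controlling the supremum uniformly over the infinite family $\text{Lip}_1(X)$. The key reduction is that, by Claim \ref{point}, $\int(\phi - \phi(x_0))\,dE = \int\phi\,dE - \phi(x_0)\text{id}_{\mathcal{H}}$, so subtracting the basepoint value $\phi(x_0)$ does not change the difference $\int\phi\,dF_n - \int\phi\,dF$. Fixing $x_0 \in X$, this lets me replace $\text{Lip}_1(X)$ by the normalized family $\text{Lip}_1^0(X) = \{\phi \in \text{Lip}_1(X) : \phi(x_0)=0\}$ in the definition of $\rho$. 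This family is equicontinuous (all its members are $1$-Lipschitz) and uniformly bounded (by $\text{diam}(X)$, finite since $X$ is compact), so by the Arzel\`a--Ascoli theorem it is totally bounded in $C_{\mathbb{R}}(X)$.

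Finally I would exploit total boundedness. Given $\epsilon>0$, cover $\text{Lip}_1^0(X)$ by finitely many sup-norm $\epsilon$-balls centered at $\phi_1,\dots,\phi_k \in C_{\mathbb{R}}(X)$; weak convergence then yields $N$ with $\|\int\phi_j\,dF_n - \int\phi_j\,dF\| < \epsilon$ for all $j$ and all $n \ge N$. For an arbitrary $\phi \in \text{Lip}_1^0(X)$, choosing $\phi_j$ with $\|\phi-\phi_j\|_\infty<\epsilon$ and inserting $\int\phi_j\,dF_n$ and $\int\phi_j\,dF$ via the triangle inequality, the two approximation terms are bounded by $\|\phi-\phi_j\|_\infty$ using Proposition \ref{pvmfact2}, giving $\|\int\phi\,dF_n - \int\phi\,dF\| < 3\epsilon$ for $n \ge N$, uniformly in $\phi$. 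Taking the supremum gives $\rho(F_n,F) \le 3\epsilon$, so $\rho(F_n,F) \to 0$. The crux of the whole argument is precisely this Arzel\`a--Ascoli reduction to a totally bounded family; once it is in place, both implications reduce to routine triangle inequalities built on the norm bound of Proposition \ref{pvmfact2}.
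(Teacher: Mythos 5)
Your proposal is correct and takes essentially the same route as the paper: the forward direction approximates $f \in C_{\mathbb{R}}(X)$ by a Lipschitz function via Lemma \ref{density} and uses the bound $\left\| \int \psi \, dF \right\| \leq \|\psi\|_{\infty}$, while the converse normalizes at a basepoint $x_0$ (so that subtracting $\phi(x_0)$ leaves the difference of integrals unchanged) and applies Ascoli's theorem to the family $\{\phi \in \text{Lip}_1(X) : \phi(x_0) = 0\}$ to extract a finite net, exactly as the paper does with its set $B$. The only cosmetic differences are that you make the first-countability reduction to sequences explicit and invoke total boundedness where the paper uses compactness of the closed set $B$.
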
 

\begin{proof} The proof of this fact follows the proof of the analogous fact in the classical setting.  It depends on Lemma \ref{density} and Ascoli's Theorem \cite{Munkres}. 
\end{proof}

We conclude this section with following discussion.  Suppose that $\mathcal{H}_1$ and $\mathcal{H}_2$ are two isomorphic Hilbert spaces with isomorphism $S: \mathcal{H}_1 \rightarrow \mathcal{H}_2$.  Consider the two associated complete metric spaces $(P_{\mathcal{H}_1}(X), \rho)$ and $(P_{\mathcal{H}_2}(X), \rho)$.  We can define $\Theta: (P_{\mathcal{H}_1}(X), \rho) \rightarrow (P_{\mathcal{H}_2}(X), \rho)$ by 

$$E(\cdot) \mapsto SE(\cdot)S^*.$$  One can show that $\Theta$ is a bijective isometry of metric spaces.  This means that, up to isomorphism of Hilbert spaces, the associated metric spaces of projection-valued measures are the same. 

\subsection{An Application for the Metric Space $(P(X), \rho)$:}

We now restrict to the situation that $\mathcal{H} = L^2(X, \mu)$, or more generally, that $\mathcal{H}$ is a Hilbert space which admits a representation of the Cunzt alegra on $N$ generators.  Consider the associated complete metric space $(P(X), \rho)$. 

\begin{thm} \label{contraction} The map $\Phi: P(X) \rightarrow P(X)$ given by $$E(\cdot) \mapsto \sum_{i=0}^{N-1} S_i E(\sigma_i^{-1}(\cdot))S_i^*$$ 

\noindent is a Lipschitz contraction in the $\rho$ metric. 

\end{thm}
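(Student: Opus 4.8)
The plan is to prove two things: that $\Phi$ genuinely maps $P(X)$ into $P(X)$, and that it contracts the $\rho$-metric by the factor $r = \max_{0 \leq i \leq N-1} r_i < 1$. For well-definedness I would fix $E \in P(X)$ and verify the four axioms for $\Phi(E)(\Delta) = \sum_i S_i E(\sigma_i^{-1}(\Delta)) S_i^*$ directly from the Cuntz relations. Each summand $S_i E(\sigma_i^{-1}(\Delta)) S_i^*$ is a projection because $S_i^* S_i = \mathbf{1}_{\mathcal{H}}$ (relation (2) with $i=j$), and distinct summands are orthogonal because $S_i^* S_j = 0$ for $i \neq j$, so their sum is a projection. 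The normalization $\Phi(E)(X) = \sum_i S_i S_i^* = \mathbf{1}_{\mathcal{H}}$ is exactly relation (1), $\Phi(E)(\emptyset) = 0$ is immediate, multiplicativity follows from $\sigma_i^{-1}(\Delta_1 \cap \Delta_2) = \sigma_i^{-1}(\Delta_1) \cap \sigma_i^{-1}(\Delta_2)$ combined with the same $S_i^* S_j = \delta_{i,j}\mathbf{1}_{\mathcal{H}}$ cross-term cancellation, and countable additivity transfers from that of $E$ using continuity of $S_i$ and $S_i^*$.

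The heart of the argument is an intertwining formula for the operator-valued integral: for every $\phi \in \text{Lip}_1(X)$,
$$\int \phi \, d\Phi(E) = \sum_{i=0}^{N-1} S_i \left( \int (\phi \circ \sigma_i) \, dE \right) S_i^*.$$
To establish it I would test both sides against arbitrary $g,h \in \mathcal{H}$ via Proposition \ref{pvmfact2}. The scalar measure decomposes as $\Phi(E)_{g,h} = \sum_i E_{S_i^* g,\, S_i^* h} \circ \sigma_i^{-1}$, and the pushforward (change of variables) identity $\int \phi \, d(\nu \circ \sigma_i^{-1}) = \int (\phi \circ \sigma_i)\, d\nu$ rewrites each term as $\langle S_i (\int (\phi \circ \sigma_i)\, dE) S_i^* g, h \rangle$. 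Since $g,h$ are arbitrary, the operator identity follows.

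With this formula, write $\int \phi\, d\Phi(E) - \int \phi\, d\Phi(F) = \sum_i S_i D_i S_i^*$, where $D_i = \int (\phi \circ \sigma_i)\, dE - \int (\phi \circ \sigma_i)\, dF$ is self-adjoint by Claim \ref{selfadjoint}, as $\phi \circ \sigma_i$ is real-valued and continuous. The key estimate is $\| \sum_i S_i D_i S_i^* \| \leq \max_i \| D_i \|$: testing against a unit vector $h$ and setting $k_i = S_i^* h$, relation (1) gives $\sum_i \|k_i\|^2 = \langle (\sum_i S_i S_i^*) h, h \rangle = 1$, so $| \langle \sum_i S_i D_i S_i^* h, h \rangle | \leq \sum_i \|D_i\| \|k_i\|^2 \leq \max_i \|D_i\|$, and self-adjointness of $\sum_i S_i D_i S_i^*$ upgrades this to the operator-norm bound.

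Finally I would invoke the contraction constants of the IFS. Since $\phi \in \text{Lip}_1(X)$ and $d(\sigma_i(x), \sigma_i(y)) \leq r_i d(x,y)$, the composition $\phi \circ \sigma_i$ is Lipschitz with constant $r_i$, so $\frac{1}{r_i}(\phi \circ \sigma_i) \in \text{Lip}_1(X)$. By linearity of the integral and the definition of $\rho$, $\|D_i\| = r_i \| \int \frac{\phi \circ \sigma_i}{r_i}\, dE - \int \frac{\phi \circ \sigma_i}{r_i}\, dF \| \leq r_i\, \rho(E,F) \leq r\, \rho(E,F)$. Combining with the norm bound and taking the supremum over $\phi \in \text{Lip}_1(X)$ yields $\rho(\Phi(E), \Phi(F)) \leq r\, \rho(E,F)$ with $r < 1$. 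I expect the main obstacle to be the rigorous derivation of the intertwining formula, specifically justifying the pushforward computation at the level of the scalar measures $\Phi(E)_{g,h}$ and tracking the adjoints $S_i^*$ correctly through the inner products; the remaining estimates are then comparatively routine.
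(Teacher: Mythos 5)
Your proposal is correct, and structurally it parallels the paper's proof of Theorem \ref{contraction}: the same axiom-by-axiom verification that $\Phi(E) \in P(X)$ via the Cuntz relations, the same scalar-measure decomposition (the paper proves it only on the diagonal, $\Phi(E)_{h,h} = \sum_i E_{S_i^*h,\,S_i^*h}\circ\sigma_i^{-1}$; your polarized $g,h$ version is a mild strengthening that packages the change of variables into the operator identity $\int\phi\,d\Phi(E) = \sum_i S_i\bigl(\int(\phi\circ\sigma_i)\,dE\bigr)S_i^*$), and the same final use of $\sum_i S_iS_i^* = \mathbf{1}_{\mathcal{H}}$ to get $\sum_i\|S_i^*h\|^2 = 1$. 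The genuine difference is in the contraction estimate, and your version is the sounder of the two. You pair the \emph{same} rescaled function $\frac{1}{r_i}(\phi\circ\sigma_i)\in\text{Lip}_1(X)$ with both $E$ and $F$, so $\|D_i\|\leq r_i\,\rho(E,F)$ follows directly from the definition of $\rho$. The paper instead fixes a base point $x_0$, bounds the $E$-integrand pointwise by $r\,d(\cdot,x_0)$, and is then led to estimate $\bigl\|\int d(\cdot,x_0)\,dE - \int g_i\,dF\bigr\|$ by $\rho(E,F)$, where $g_i = \frac{1}{r}\bigl(\phi\circ\sigma_i - \phi(\sigma_i(x_0))\bigr)$ is a \emph{different} $\text{Lip}_1(X)$ function from $d(\cdot,x_0)$; as written this step does not follow from the definition of $\rho$, which compares integrals of one and the same test function against the two measures, and it already fails when $E=F$, where $\rho(E,F)=0$ while $\int\bigl(d(\cdot,x_0)-g_i\bigr)\,dE$ is in general a nonzero positive operator. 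Your route avoids the anchoring at $x_0$ entirely (constants integrate to multiples of the identity and cancel between $E$ and $F$, so no such normalization is needed), and in effect it repairs the one questionable step in the paper's argument. The two points you flag as potential obstacles are both fine: the pushforward identity holds for bounded Borel integrands, and the numerical-radius upgrade $\|\sum_i S_iD_iS_i^*\|\leq\max_i\|D_i\|$ is legitimate because each $S_iD_iS_i^*$ is self-adjoint by Claim \ref{selfadjoint}.
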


\begin{proof} We begin by showing that the map $\Phi$ is well defined.  Indeed, let $\Delta \in \mathcal{B}(X)$. 

\begin{eqnarray*} 
(\Phi(E)(\Delta))^2 & = & \left(\sum_{i=0}^{N-1} S_i E(\sigma_i^{-1}(\Delta))S_i^*\right)^2 \\
			      & = & \sum_{i=0}^{N-1} S_i E(\sigma_i^{-1}(\Delta))S_i^* \sum_{j=0}^{N-1} S_j E(\sigma_j^{-1}(\Delta))S_j^* \\
			      & = & \sum_{i=0}^{N-1} S_i E(\sigma_i^{-1}(\Delta))^2S_i^* \\
			      & = & \sum_{i=0}^{N-1} S_i E(\sigma_i^{-1}(\Delta))S_i^* \\
			      & =& \Phi(E)(\Delta), \end{eqnarray*} 
			     
\noindent where the third equality is because $S_i^*S_j = \delta_{i,j}\text{id}_{\mathcal{H}}$, and the fourth equality is because $E(\sigma_i^{-1}(\Delta))$ is a projection (in particular an idempotent) for all $0 \leq i \leq N-1$.  One can also see that $\Phi(E)$ is self-adjoint, and therefore, a projection in $\mathcal{B}(\mathcal{H})$.  One can verify the remaining conditions that define a projection-valued measure.  Next, we note the following claim, which can be easily computed.  

\begin{cla} Let $h \in \mathcal{H}$.  Then,  $$\Phi(E)_{h,h}(\Delta) = \sum_{i=0}^{N-1} E_{S_i^*h, S_i^*h}(\sigma_i^{-1}(\Delta)),$$ for all $\Delta \in \mathcal{B}(X)$. 

\end{cla}

We now show that $\Phi$ is a Lipschitz contraction in the $\rho$ metric.  Let $E, F \in P(X)$.  Recall that $r = \max_{0 \leq i \leq N-1} \{r_i\}$, where $r_i$ is the Lipschitz constant associated to $\sigma_i$, and note that $0 < r < 1$.  Choose $\phi \in \text{Lip}_1(X)$, and $h \in \mathcal{H}$ with $||h||=1$. Then \\

$\displaystyle{\left | \left \langle \left(\int \phi d\Phi(E) - \int \phi d\Phi(F)\right)h, h \right \rangle \right |}$ \\
$\displaystyle{= \left | \left \langle \left(\int \phi d\Phi(E) \right)h,h \right \rangle  -  \left \langle \left( \int \phi d\Phi(F)\right)h, h \right \rangle \right|  = \left | \int_X \phi d\Phi(E)_{h,h} - \int_X \phi d\Phi(F)_{h,h} \right |}$ 
$\displaystyle{= \left | \sum_{i=0}^{N-1} \int_X \phi dE_{S_i^*h, S_i^*h}(\sigma_i^{-1}(\cdot)) - \sum_{i=0}^{N-1} \int_X \phi dF_{S_i^*h, S_i^*h}(\sigma_i^{-1}(\cdot)) \right|}$ \\
$\displaystyle{ = \left| \sum_{i=0}^{N-1} \int_X (\phi \circ \sigma_ i)dE_{S_i^*h, S_i^*h}  - \sum_{i=0}^{N-1} \int_X (\phi \circ \sigma_i) dF_{S_i^*h, S_i^*h} \right|}$ \\
$\displaystyle{= \left| \sum_{i=0}^{N-1} \left( \int_X (\phi \circ \sigma_ i)dE_{S_i^*h, S_i^*h}  - \int_X (\phi \circ \sigma_i) dF_{S_i^*h, S_i^*h} \right) \right|}$ \\
$\displaystyle{= \left| \sum_{i=0}^{N-1} r\left( \int_X \left( \frac{\phi \circ \sigma_ i}{r} \right) dE_{S_i^*h, S_i^*h}  - \int_X \left(\frac{\phi \circ \sigma_i}{r} \right) dF_{S_i^*h, S_i^*h} \right) \right|}$ \\
$\displaystyle{\leq r \left( \sum_{i=0}^{N-1} \left| \int_X \left( \frac{\phi \circ \sigma_ i}{r} \right) dE_{S_i^*h, S_i^*h}  - \int_X \left( \frac{\phi \circ \sigma_i}{r} \right) dF_{S_i^*h, S_i^*h} \right| \right)}$ \\
$\displaystyle{= r \left( \sum_{i=0}^{N-1} \left| \left \langle \left( \int \left(\frac{\phi \circ \sigma_i}{r} \right) dE - \int \left(\frac{\phi \circ \sigma_i}{r} \right) dF \right)S_i^*h, S_i^*h \right \rangle \right| \right)}$ \\
$\displaystyle{\leq r \left( \sum_{i=0}^{N-1} \left| \left| \int \left( \frac{\phi \circ \sigma_i}{r} \right) dE - \int \left( \frac{\phi \circ \sigma_i}{r} \right) dF \right| \right| ||S_i^* h||^2\right).}$

\noindent Note that the function $\displaystyle{\frac{\phi \circ \sigma_i}{r} \in \text{Lip}_1(X)}$ for all $0 \leq i \leq N-1$.  Hence

$\displaystyle{r \left( \sum_{i=0}^{N-1} \left| \left| \int \left( \frac{\phi \circ \sigma_i}{r} \right) dE - \int \left( \frac{\phi \circ \sigma_i}{r} \right) dF \right| \right| ||S_i^* h||^2\right)}$ \\
$\displaystyle{\leq r \rho(E,F) \left( \sum_{i=0}^{N-1} \langle S_i^* h, S_i^* h \rangle \right)  =  r \rho(E,F) \left( \sum_{i=0}^{N-1} \langle S_iS_i^* h, h \rangle \right)}$ \\
$\displaystyle{= r \rho(E,F) \left \langle \left( \sum_{i=0}^{N-1} S_iS_i^* \right) h, h \right \rangle  =  r \rho(E,F) \left \langle h, h \right \rangle  = r \rho(E,F).}$

\noindent Therefore

$$\left| \left| \int \phi d\Phi(E) - \int \phi d\Phi(F) \right| \right| \leq r \rho(E,F).$$ 

\noindent Since $\phi$ is an arbitrary element of $\text{Lip}_1(X)$, 
$$\rho(\Phi(E), \Phi(F)) \leq r \rho(E,F).$$ 

\noindent This proves that $\Phi$ is a Lipschitz contraction in the $\rho$ metric on $P(X)$.  

\end{proof}

\subsection{An Alternative Proof of Theorem \ref{uniquepvm}:}

By Theorem \ref{complete} and Theorem \ref{contraction}, we know that $\Phi$ is a contraction on the complete metric space $(P(X), \rho)$.  By the Contraction Mapping Theorem, there exists a unique projection-valued measure, $E \in P(X)$, such that

\begin{equation} \label{projfixed} E(\cdot) = \sum_{i=0}^{N-1} S_i E(\sigma_i^{-1}(\cdot)) S_i^*. \end{equation}

A proof by induction on $k$ yields that that $E(A_k(a)) = P_k(a)$ for all $k \in \mathbb{Z}_+$ and $a \in \Gamma_N^k$.  

\subsection{Additional Observations:}

In the case that  $\mathcal{H} = L^2(X,\mu)$, one can calculate  that $P_k(a) = M_{{\bf{1}}_{A_k(a)}}$, where $M_{{\bf{1}}_{A_k(a)}}$ is the projection on $L^2(X,\mu)$ given by multiplication by ${\bf{1}}_{A_k(a)}$.  Recall that $A_k(a) = \sigma_{a_1} \circ ... \circ \sigma_{a_k} (X)$ for $a = (a_1,...,a_k)$. Hence, $E(\cdot)$ is the canonical projection-valued measure given by multiplication by the indicator function. In the case that $\mathcal{H}$ is an arbitrary Hilbert space that admits a representation of the Cuntz algebra on $N$ generators, Jorgensen has a generalized result in \cite{Jorgensen2}.  

We now present an example which shows that $(P(X), \rho)$ is not compact.  In particular, let $\mathcal{H} = L^2(\mathbb{R},m)$ where $m$ is Lebesgue measure.  Let $B_1$ be the collection of normal operators, $N$, on $\mathcal{H}$ such that $||N|| \leq 1$, where $||\cdot||$ is the operator norm.  By the Spectral Theorem for Normal Operators, if $N \in B_1$ there exists a unique projection-valued measure, $F: \mathbb{C} \rightarrow \mathcal{B}(\mathcal{H})$, whose support is contained in the closed ball of radius $1$ in $\mathbb{C}$ centered at the origin, $B_0(1)$, and satisfies the relationship

$$N = \int_{B_0(1)} z dF(z).$$

Suppose $\{N_k\}_{k=1}^{\infty} \subseteq B_1$ is sequence of normal operators.  Let $\{E_k\}_{k=1}^{\infty}$ be the corresponding sequence of projection-valued measures given by the Spectral Theorem.  We note that the support of $E_k$ is contained in the compact set $B_0(1)$ for all $k=1,2,...$.  One can then consider the sequence $\{E_k\}_{k=1}^{\infty}$ as belonging to the metric space $P(B_0(1), \rho)$ of projection-valued measures.  We will show that this metric space is not compact.  Indeed, observe that the map $\phi: B_0(1) \rightarrow \mathbb{C}$ given by $z \mapsto z$ has the property that it's real and imaginary parts are elements of $\text{Lip}_1(B_0(1))$.  Using this fact, one can prove via the Stone-Weierstrass Theorem that the sequence $N_k$ converges to a normal operator $N \in B_1$ in the operator norm if and only if the sequence $E_k$ converges to $E$ in the $\rho$ metric.  This will yield non-compactness.  Indeed, the sequence of normal operators, $\{M_{{\bf{1}_{[k,k+1)}}}\}_{k=1}^{\infty} \subseteq B_1$, has no convergent subsequence in the operator norm, and therefore, the corresponding sequence of projection-valued measures $\{E_k\}_{k=1}^{\infty}$ has no convergent subsequence in the $\rho$ metric.  Hence, $P(B_0(1), \rho)$ is not compact.

\section{Conclusion:} 

We identify a list of further generalizations that we have considered (but are not discussed above).  

\begin{enumerate}

\item We have shown that the collection of positive-operator valued measures (a generalization of a projection-valued measure) forms a complete metric space. 

\item We have shown that a certain sub-collection of positive operator-valued measures on an arbitrary underlying complete and separable metric space (not necessarily compact) forms a complete metric space.  

\item We have considered the map, 

$$E(\cdot) \mapsto \sum_{i=0}^{N-1} S_i E(\sigma_i^{-1}(\cdot))S_i^*,$$

\noindent when the maps $\{\sigma_i\}$ constitute a weakly hyperbolic iterated function system on a compact metric space $(X,d)$ (see \cite{Arbieto} and \cite{Edalat}).  This is a weaker notion than when each $\sigma_i$ is a Lipschitz contraction on $X$.  

\item If we equip $\mathcal{B}(\mathcal{H})$ with the weak operator topology, define the WOT-weak topology to be the weakest topology on the space of projection (positive operator) valued measures  that makes the collection of functions $\{\hat{f}: f \in C(X)\}$  given by $A \mapsto \hat{f}(A) := \int f dA$ continuous (where here we are assuming $X$ is compact).  We have shown that this is a compact topology by directly generalizing the proof of compactness in the classical setting (Proposition \ref{twofacts}).  Importantly, we note that this fact has been previously shown by Ali \cite{Ali}, using more general theory.  

\end{enumerate}

\section{Acknowledgements:}

The author would like to thank his advisor, Judith Packer (University of Colorado), for a careful review of this material, and her guidance on this research.

\end{document}